\DeclareMathOperator{\End}{End}
\DeclareMathOperator{\Ric}{Ric}
\DeclareMathOperator{\spt}{spt}
\DeclareMathOperator{\Id}{Id}
\DeclareMathOperator{\diam}{diam}
\DeclareMathOperator{\Vol}{Vol}
\DeclareMathOperator{\length}{length}
\DeclareMathOperator{\Area}{Area}
\DeclareMathOperator{\dist}{dist}
\DeclareMathOperator{\Jac}{Jac}
\DeclareMathOperator{\spherevol}{SphereVol}
\DeclareMathOperator{\id}{id}
\DeclareMathOperator{\set}{set}
\DeclareMathOperator{\dvol}{dvol}
\newtheorem{theo}{Theorem}[section]
\newtheorem{prop}[theo]{Proposition}
\newtheorem{lemme}[theo]{Lemma}
\newtheorem{definition}[theo]{Definition}
\newtheorem{coro}[theo]{Corollary}
\theoremstyle{definition}
\newtheorem{remarque}[theo]{Remark}
\newtheorem{exemple}[theo]{Example}
\newtheorem{assumption}[theo]{Assumption}
\begin{document}
\title[Entropy and stability of hyperbolic manifolds]
{Entropy and stability of hyperbolic manifolds}

\author{Antoine Song}
\address{California Institute of Technology\\ 177 Linde Hall, \#1200 E. California Blvd., Pasadena, CA 91125}
\email{aysong@caltech.edu}

\maketitle

\begin{abstract} 

Let $(M,g_0)$ be a closed oriented hyperbolic manifold of dimension at least $3$. 
By the volume entropy inequality of G. Besson, G. Courtois and S. Gallot, for any Riemannian metric  $g$ on $M$ with same volume as $g_0$, its volume entropy $h(g)$ satisfies
$h(g)\geq n-1$ with equality only when $g$ is isometric to $g_0$. We show that the hyperbolic metric $g_0$ is stable in the following sense:
if $g_i$ is a sequence of Riemaniann metrics on $M$ of same volume as $g_0$ and if $h(g_i)$ converges to $n-1$, then there are smooth subsets $Z_i\subset M$  such that both $\Vol(Z_i,g_i)$ and $\Area(\partial Z_i,g_i)$ tend to $0$, and $(M\setminus Z_i,g_i)$ 
converges to $(M,g_0)$ in the measured Gromov-Hausdorff topology.
The proof relies on showing that any spherical Plateau solution for $M$ is intrinsically isomorphic to $(M,\frac{(n-1)^2}{4n} g_0)$.


\end{abstract}


\section*{Introduction}

Let $M$ be a hyperbolic manifold of dimension at least $3$ with hyperbolic metric $g_0$. If $g$ is a Riemannian metric on $M$, let $h(g)$ denote its volume entropy:
$$h(g):=\lim_{R\to \infty} \frac{\log \Vol(\tilde{B}_g(o,R),g)}{R}$$
where $\tilde{B}_g(o,R)$ denotes the geodesic $R$-ball centered at some point $o$ in the universal cover $(\tilde{M},g)$ of $(M,g)$. 
The fundamental volume entropy inequality, proved by Besson-Courtois-Gallot in \cite{BCG95,BCG96}, asserts that for any Riemannian metric $g$ on $M$ of same volume as $g_0$, we have
\begin{equation}\label{entropy inequality}
h(g)\geq h(g_0)=n-1.
\end{equation}
Moreover, Besson-Courtois-Gallot showed  that this inequality is rigid in the sense that if equality holds, then $g$ is isometric to $g_0$. How stable is the volume entropy inequality?
We find that stability holds after removing negligible subsets:

\begin{theo} \label{theorem:stable}
Let $(M,g_0)$ be a closed oriented hyperbolic manifold of dimension at least $3$. Let $\{g_i\}_{i\geq1}$ be a sequence of Riemannian metrics on $M$ with $\Vol(M,g_i) = \Vol(M,g_0)$. If 
$$\lim_{i\to  \infty} h(g_i) = n-1,$$
then there is a sequence of smooth subsets $Z_i \subset M$ such that 
$$\lim_{i\to \infty}\Vol(Z_i,g_i) = \lim_{i\to \infty}\Area(\partial Z_i,g_i) =0$$ and 
$(M\setminus Z_i,g_i)$ converges to $(M,g_0)$ in the measured Gromov-Hausdorff topology.

\end{theo}

\vspace{1em}

In the statement of Theorem \ref{theorem:stable}, $(M\setminus Z_i,g_i)$ is the metric space where the distance between two points $a,b\in M\setminus Z_i$ is given by the infimum of the $g_i$-lengths of curves joining $a$ to $b$ inside $ M\setminus Z_i$. 
A sequence of manifolds converges in the measured Gromov-Hausdorff topology if it converges both in the Gromov-Hausdorff and Gromov-Prokhorov topologies (for a definition of those topologies, see \cite[Chapter 27, page 778]{Vil09}).
Gromov-Prokhorov convergence implies $\lim_{i\to \infty}\Vol(Z_i,g_i) =0$. On the other hand, the conclusion that $\lim_{i\to \infty}\Area(\partial Z_i,g_i) =0$ is a strong additional  property.

It is elementary to see that naive stability for the Gromov-Hausdorff topology does not hold. Indeed, by adding thin and long threads to the hyperbolic metric $g_0$, we get a new metric $g$ whose volume and volume entropy are arbitrarily close to $\Vol(M,g_0)$ and $n-1$ respectively. In this example, $(M,g)$ is far from $(M,g_0)$ in the Gromov-Hausdorff topology, although it is still close to $(M,g_0)$ in the Gromov-Prokhorov topology. 
The following question remains open: \emph{under the assumptions of Theorem \ref{theorem:stable}, does $(M,g_i)$ converge to $(M,g_0)$ in the Gromov-Prokhorov topology?} In Remark \ref{optimal}, we discuss the optimality of Theorem \ref{theorem:stable} with a notion of ``coarse dimension'' for Riemannian manifolds.

\vspace{1em}

\subsection*{Historical comments}

The question of stability for the volume entropy was raised by Courtois in \cite{Courtois98}, and variants of this problem have been previously studied
by Bessi\`{e}res-Besson-Courtois-Gallot \cite{BBCG12} under a lower bound on the Ricci curvature (see also \cite{LW11}), by 
Guillarmou-Lefeuvre \cite{GL19} and Guillarmou-Knieper-Lefeuvre \cite{GKL22} for neighborhoods of negatively curved manifolds, and Butt \cite{Butt22} assuming uniform negative curvature bounds. 
We note that the differential rigidity result of \cite{BBCG12} should follow from Theorem \ref{theorem:stable} and the theory of Cheeger-Colding \cite[Theorem A.1.12]{CC97}.

The stability of geometric inequalities for Riemannian manifolds is a theme that has been extensively studied. We emphasize that in Theorem \ref{theorem:stable}, no a priori curvature bound is required. 
The proof of this result has thus a quite different flavor compared to stability results under curvature bounds. 
Theorem \ref{theorem:stable} provides a stability result  after removing ``negligible'' subset. This is formally similar to a stability result we recently proved with Conghan Dong  for the Positive Mass theorem \cite{DS23}, which settles a conjecture of Huisken-Ilmanen. 
For stability results in the context of curvature bounds, see \cite{Colding96a,Colding96b,Petersen99,Aubry05,CRX19,CDNPSW21}... for Ricci curvature, see \cite{LS14,HLS17,Sormani21, LNN20,Allen21,CL22,DS23}... for scalar curvature.  For spectral isoperimetric inequalities on surfaces, see \cite{KNPS21} and references therein. 



\vspace{1em}

   \subsection*{Main ingredients}

The first main input in the proof of Theorem \ref{theorem:stable} is the theory of integral currents in metric spaces from geometric measure theory \cite{AK00,Lang11,Wenger11,SW11} 
In particular, we make essential use of a compactness theorem due to Wenger \cite{Wenger11} which is formulated in terms of the intrinsic flat topology for integral current spaces \cite{SW11}.  
With some hindsight, revisiting Besson-Courtois-Gallot's original work using tools from geometric measure theory is especially natural, which is  one of the main points of this paper. For instance, this combination leads directly to the ``spherical Plateau problem'' described in the next subsection, which enjoys rigidity properties at least as strong as for the minimal volume entropy problem.

The second ingredient in the proof of Theorem \ref{theorem:stable} is a sharp comparison result for the volume entropy of manifolds almost metrically dominated by a closed hyperbolic manifold, Theorem \ref{cle}. Its proof relies on the equidistribution of geodesic spheres in closed hyperbolic manifolds. As a side note, together with Demetre Kazaras and Kai Xu, we recently applied this comparison result together with a ``drawstring'' construction to provide counterexamples to a conjecture of Agol-Storm-Thurston relating scalar curvature and volume entropy \cite{KSX23}.

These two parts together yield a stability result stronger than Theorem \ref{theorem:stable}: under the same assumptions, $(M\setminus Z_i,g_i)$ actually converges to $(M,g_0)$ with respect to the intrinsic flat topology, see Theorem \ref{intrinsic flat stability}. 

\vspace{1em}

   \subsection*{The spherical Plateau problem}

The proof of Theorem \ref{theorem:stable} is closely related to a  variational problem in infinite dimension, called the spherical Plateau problem. Let $(M,g_0)$ be a closed oriented hyperbolic manifold with its hyperbolic metric and let $\Gamma:=\pi_1(M)$.
Consider the unit sphere $S^\infty$ in the Hilbert space $\ell^2(\Gamma)$ and let $\Gamma$ act on $S^\infty$ by the regular representation $\lambda_\Gamma$. Denote by $S^\infty/\lambda_{\Gamma}(\Gamma)$ the corresponding quotient manifold, endowed with the standard round Hilbert Riemannian metric $\mathbf{g}_{\text{Hil}}$. There is a unique homotopy class $\mathscr{H}_M$ of smooth immersions from $M$ to $S^\infty/\lambda_{\Gamma}(\Gamma)$ inducing an isomorphism on the fundamental groups. Besson-Courtois-Gallot define the \emph{spherical volume} of $M$ \cite{BCG91} as follows
$$\spherevol(M) := \inf\{\Vol(M,\phi^*\mathbf{g}_{\text{Hil}} ); \quad \phi \in \mathscr{H}_M\}.$$

A key step in Besson-Courtois-Gallot's  proof \cite{BCG95,BCG96} of the entropy inequality (\ref{entropy inequality}) is to establish that
$$\spherevol(M)=\Vol(M, \frac{(n-1)^2}{4n} g_0).$$ 
That result led us to consider  in \cite{Antoine23a} the corresponding volume minimization problem, in particular  the study of ``limits of minimizing sequences''. Consider  any minimizing sequence of maps $\phi_i\in \mathscr{H}_M$, namely a sequence such that 
$$\lim_{i\to \infty}\Vol(M,\phi_i^*\mathbf{g}_{\text{Hil}}) = \spherevol(M).$$
Then by Wenger's compactness theorem \cite{Wenger11}, the images $\phi_i(M)$ subsequentially converge as integral current spaces to an integral current space 
$$C_\infty = (X_\infty,d_\infty, T_\infty)$$ in the intrinsic flat topology, in the sense of Sormani-Wenger \cite{SW11}. Here $(X_\infty,d_\infty)$ is a metric space, $T_\infty$ is an integral current in the completion of $(X_\infty,d_\infty)$, see Subsection \ref{appendix a}. We call any such limit $C_\infty$ a \emph{spherical Plateau solution} for $M$.

Our second main theorem concerns the intrinsic uniqueness  of spherical Plateau solutions for hyperbolic manifolds. The notion of ``intrinsic isomorphism'' between two integral current spaces will be defined in Definition \ref{definition:intrinsic isomorphism}.

\begin{theo}\label{theorem:unique}
If $(M,g_0)$ is a closed oriented hyperbolic manifold of dimension $n\geq 3$, then any spherical Plateau solution for $M$ is intrinsically isomorphic to $(M,\frac{(n-1)^2}{4n} g_0)$.
\end{theo}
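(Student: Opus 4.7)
The plan is to adapt the barycenter method of Besson--Courtois--Gallot to the infinite-dimensional Hilbert sphere setting and then to carry the resulting $1$-Lipschitz estimate through the intrinsic flat limit.

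Fix a minimizing sequence $\phi_i \in \mathscr{F}(M)$ whose integral current images in $\mathcal{S}(\Gamma)/\Gamma$ converge in the intrinsic flat topology to $C_\infty = (X_\infty, d_\infty, T_\infty)$. Wenger's lower semi-continuity of mass then gives
\[
\mathbf{M}(T_\infty) \;\leq\; \liminf_i \Vol(M, \phi_i^* \mathbf{g}_{\text{Hil}}) \;=\; \spherevol(M) \;=\; \Vol\bigl(M, \tfrac{(n-1)^2}{4n} g_0\bigr).
\]

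The key step is to construct a canonical $\Gamma$-equivariant ``inverse barycenter'' map $\tilde{\Psi}: \mathcal{S}(\Gamma) \to \tilde M$ descending to a $1$-Lipschitz map $\Psi: \mathcal{S}(\Gamma)/\Gamma \to (M, \tfrac{(n-1)^2}{4n} g_0)$ that retracts the natural embedding $F_0 \in \mathscr{F}(M)$, i.e.\ $\Psi \circ F_0 = \id_M$. For $u \in \mathcal{S}(\Gamma)$, interpret $(|u_\gamma|^2)_{\gamma\in\Gamma}$ as a probability measure on the orbit $\Gamma\xi_0 \subset \tilde M$, and define $\tilde\Psi(u)$ as the minimizer in $\tilde M$ of a convex functional obtained by averaging hyperbolic Busemann cocycles against this measure, the exponential weighting being fixed by the Patterson--Sullivan construction at the critical exponent $n-1$. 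The infinitesimal Jacobian inequality of BCG, sharp precisely for hyperbolic targets in dimension $n\geq 3$, yields $|\Jac \tilde\Psi| \leq 1$ everywhere, hence the $1$-Lipschitz estimate. Equidistribution of geodesic spheres in $(M, g_0)$ enters here to guarantee that the averages of Busemann functions against orbit measures on $\Gamma\xi_0$ converge uniformly to their limiting form, which is what produces the sharp constant $1$ at the borderline exponent $n-1$ (rather than a worse constant $(c/(n-1))^n$ at $c > n-1$).

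Passing to the limit, each composition $\Psi \circ \phi_i: M \to (M, \tfrac{(n-1)^2}{4n} g_0)$ is $1$-Lipschitz of degree $\pm 1$, since both $\phi_i$ and $F_0$ lie in $\mathscr{F}(M)$ and $\Psi \circ F_0 = \id$. Working in a common ambient metric space provided by the intrinsic flat convergence, equicontinuity of $1$-Lipschitz maps lets us extract a $1$-Lipschitz limit $\Psi_\infty: X_\infty \to M$ with $(\Psi_\infty)_\sharp T_\infty = [M]$, the fundamental class of $(M, \tfrac{(n-1)^2}{4n} g_0)$. Then
\[
\Vol\bigl(M, \tfrac{(n-1)^2}{4n}g_0\bigr) \;=\; \mathbf{M}\bigl((\Psi_\infty)_\sharp T_\infty\bigr) \;\leq\; \mathbf{M}(T_\infty) \;\leq\; \Vol\bigl(M, \tfrac{(n-1)^2}{4n}g_0\bigr),
\]
so equality holds throughout. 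A mass-preserving $1$-Lipschitz map between integral current spaces must be an isometric isomorphism on the support of the current, identifying $C_\infty$ with $(M, \tfrac{(n-1)^2}{4n}g_0)$.

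The main obstacle will be the construction and limiting behaviour of $\Psi_\infty$: making sense of the barycenter $\tilde\Psi(u)$ for arbitrary $u \in \mathcal{S}(\Gamma)$ whose associated orbit measure need not be concentrated, obtaining the sharp global $1$-Lipschitz constant at the critical exponent (this is where equidistribution of geodesic spheres is indispensable), and upgrading BCG's infinitesimal Jacobian rigidity into metric rigidity on the a priori rough space $X_\infty$ without using any regularity of the intrinsic flat limit.
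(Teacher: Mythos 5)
Your proposal hinges on two claims that do not hold as stated, and the paper's proof is organized precisely to avoid them. First, the BCG Jacobian bound $|\Jac \mathrm{Bar}| \leq 1$ (after rescaling) does \emph{not} imply that $\mathrm{Bar}$ is globally $1$-Lipschitz; a Jacobian bound controls $n$-dimensional volume distortion, not the operator norm of the differential, and the barycenter map's Lipschitz constant actually blows up near $\mathrm{Bar}^{-1}(\{\gamma.o\}_\gamma)$. The only near-Lipschitz control one gets (Lemma \ref{astuce}) is conditional: it requires the Jacobian to be \emph{almost saturated} and the barycenter to stay a definite distance $d$ from the orbit $\{\gamma.o\}$, with a constant $c_{\eta,d}$ that degenerates as $d\to 0$. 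This is why the paper has to carve out the neighborhoods $N_d$ and work with the exhaustion $D_{k,i}$ rather than run a global Lipschitz argument. Second, your final step — ``a mass-preserving $1$-Lipschitz map between integral current spaces must be an isometric isomorphism'' — is exactly the inference the paper explicitly says fails here: for the rigidity argument of \cite[Proposition 7.1]{BCG95} and \cite{BBCG12} to close, one needs the source space to be smooth off a small singular set, whereas $X_\infty$ is a priori just the support of an integral current in a Banach space and could contain metric shortcuts (e.g.\ points identified) that would not be detected by mass and degree. The paper's genuinely new ingredient is the curve lifting property, proved via the coarea formula, which gives the \emph{reverse} inequality on lengths and hence the intrinsic isometry; your proposal contains nothing to substitute for it.

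There are two smaller misattributions. Equidistribution of geodesic spheres plays no role in the proof of Theorem \ref{theorem:unique}; it enters only in Section \ref{section:stability} to pass from intrinsic isomorphism to intrinsic flat (and then Gromov--Prokhorov) stability. And the paper's barycenter is built from the $\ell^2(\Gamma)$-weighted distance functionals $\mathcal{B}_f(x)=\sum_\gamma |f(\gamma)|^2 \rho_{\gamma.o}(x)$ on the admissible set $\mathbb{S}^+$ of finitely supported unit vectors, not from Busemann cocycles weighted by Patterson--Sullivan measures; the latter is closer to the original BCG setup, which the paper deliberately avoids. Making the barycenter well defined on all of $\mathcal{S}(\Gamma)$, as your $\tilde\Psi$ would require, is not something the paper attempts or needs, since minimizing sequences can be perturbed into $\mathbb{S}^+/\Gamma$.
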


Theorem \ref{theorem:unique} leads to a rigidity result with a representation theoretic flavor for $\pi_1(M)$, see \cite[Corollary 4.3]{Antoine23a}.
Conjecturally, the spherical Plateau solution for a closed oriented hyperbolic manifold is unique  \cite[Question 8]{Antoine23a}. 
The spherical Plateau problem is of independent geometric interest: in \cite{Antoine23a}, we sketch the proof of the intrinsic uniqueness of spherical Plateau solutions for all oriented closed $3$-manifolds, and the construction of higher dimensional analogues of hyperbolic Dehn fillings. 
Strictly speaking, the statement of Theorem \ref{theorem:unique} is not necessary to show Theorem \ref{theorem:stable}. However, the methods in its proof do play a central role.


\begin{remarque} 
The arguments in this paper extend to closed  oriented manifolds which are locally symmetric of rank one due to \cite{BCG96,Ruan22}, and so versions of the main theorems hold more generally for these spaces.
\end{remarque}

\vspace{1em}

\subsection*{Outline of the proofs}
{}\

\textbf{For Theorem} \ref{theorem:unique}:
In order to describe the proof, it is helpful to recall how  Besson-Courtois-Gallot were able to compute the spherical volume 
$$\spherevol(M)=\Vol(M, \frac{(n-1)^2}{4n} g_0).$$ Their main tool was the barycenter map ${\mathrm{Bar}}$. In our setting, this is a Lipschitz map which under some technical conditions sends cycles in $S^\infty/\lambda_{\Gamma}(\Gamma)$ of the form $\phi(M)$, where $\phi\in \mathcal{F}(M)$, to the rescaled hyperbolic manifold $(M, \frac{(n-1)^2}{4n} g_0)$ with topological degree $1$. Roughly speaking, the Jacobian of restriction of the barycenter map ${\mathrm{Bar}}: \phi(M) \to (M, \frac{(n-1)^2}{4n} g_0)$ satisfies \cite{BCG95,BCG96} 
\begin{equation} \label{bound1}
|\Jac {\mathrm{Bar}}| \leq 1,
\end{equation}
which in particular implies that $\spherevol(M)\geq \Vol(M, \frac{(n-1)^2}{4n} g_0)$. Then the opposite inequality is checked by finding an explicit sequence of embeddings $\phi_i\in \mathcal{F}(M)$ such that $\lim_{i\to \infty}\Vol(M,\phi_i^*\mathbf{g}_{\text{Hil}}) = \Vol(M, \frac{(n-1)^2}{4n} g_0).$

In order to show that spherical Plateau solutions are unique up to intrinsic isomorphism, we try to argue as follows. 
Consider a minimizing sequence of maps $\phi_i\in \mathcal{F}(M)$, and denote by $C_i$ the integral currents of $S^\infty/\lambda_{\Gamma}(\Gamma)$ induced by pushing forward the fundamental class of $M$ by $\phi_i$. The barycenter map ${\mathrm{Bar}}$ enjoys the Jacobian bound (\ref{bound1}) which is almost achieved on a region $\Omega_i \subset \spt(C_i)$ that covers almost all of $\spt(C_i)$ as $i\to \infty$. Nontrivially, this implies a local Lipschitz bound for ${\mathrm{Bar}}$, which holds on a whole neighborhood of $\Omega_i$, and the differential of ${\mathrm{Bar}}$ at points of $\Omega_i$ can be shown to be close to a linear isometry. 
We can assume, by Wenger's compactness theorem, that $C_i$ converges to a spherical Plateau solution 
$$C_\infty =(X_\infty,d_\infty,S_\infty)$$
(the fact that such a limit exists is crucial). 
We then construct a limit map from the support of $S_\infty$ to $M$:
$${\mathrm{Bar}}_\infty : \spt S_\infty \to M$$ sending the current structure $S_\infty$ to the natural current structure $\llbracket 1_M\rrbracket$ supported on $M$.
Heuristically, as $i$ goes to infinity, the Jacobian bound (\ref{bound1}) for ${\mathrm{Bar}} : \spt(C_i) \to (M, \frac{(n-1)^2}{4n} g_0)$ should be almost saturated almost everywhere, which means that the differential of ${\mathrm{Bar}} $ should be close to a linear isometry almost everywhere. 
In other words, ${\mathrm{Bar}} : \spt(C_i) \to (M, \frac{(n-1)^2}{4n} g_0)$  are almost Riemannian isometries. Passing to the limit, we should be able to deduce that ${\mathrm{Bar}}_\infty$ is an isometry for the intrinsic metrics, which  would essentially conclude the proof. 
This strategy of constructing a limit barycenter map has been exploited in the rigidity theorems of \cite[Proposition 7.1]{BCG95} and \cite{BBCG12} where curvature bounds are assumed. 
There, the authors can argue that since their limit barycenter map is $1$-Lipschitz and preserves the volume, it has to be an isometry, see \cite[Proposition C.1]{BCG95} and \cite[Sections 3, 4, 5]{BBCG12}. Related or more general ``Lipschitz-volume'' rigidity results were obtained in \cite[Theorem 1.1]{DNP23}, \cite[Theorem 1.1]{GCS23} and \cite[Theorem 1.2]{Zus23}.

However, all those results depend either on the regularity of the convergence to the limit space outside of a small singular set, or on the $1$-Lipschitz continuity of the limit map. The new challenge in our case is the lack of a priori regularity for spherical Plateau solutions and the fact that the limit map is never $1$-Lipschitz in our situation (even though it will a posteriori follow that it is $1$-Lipschitz for the intrinsic metric on $\spt S_\infty$). 
To address this issue, we show in Proposition \ref{proposition:limit map} that under some natural assumptions, limits of almost Riemannian isometries are Riemannian isometries. The proof  uses a ``curve lifting'' argument, which in turn is based on an averaging argument involving the coarea formula.




\textbf{For Theorem} \ref{theorem:stable}:

Consider a Riemannian metric $g$ on $M$ with same volume as $g_0$ and with entropy close to $n-1$. Then, there is a uniformly Lipschitz map 
$$\mathcal{P} : (M,\frac{(n-1)^2}{4n} g) \to (S^\infty/\lambda_{\Gamma}(\Gamma),\mathbf{g}_{\mathrm{Hil}})$$ which is almost a Riemannian isometry to its image, as observed by Besson-Courtois-Gallot \cite{BCG91}.
We apply again Proposition \ref{proposition:limit map} as in the proof of Theorem \ref{theorem:unique} to $\mathrm{Bar} \circ \mathcal{P}$ instead of $\mathrm{Bar}$.  We deduce that, for smooth subset $Z\subset M$,
\begin{itemize}
\item $\Vol(Z,g)$ and $\Area(\partial Z,g)$ are both small,
\item $(M\setminus Z,g)$ is close in the intrinsic flat topology to a space $C_\infty=(X_\infty,d_\infty,S_\infty)$,
\item there is a bi-Lipschitz, $1$-Lipschitz map 
$$\Psi: (M,g_0) \to (\spt S_\infty,d_\infty),$$
\item $(M\setminus Z,g)$ is  Gromov-Hausdorff close to $(\spt S_\infty,d_\infty)$ via a topologically natural map.
\end{itemize}
The properties of $\mathrm{Bar} \circ \Psi$  are not as good as those of $\mathrm{Bar}$, so unlike Theorem \ref{theorem:unique}, we cannot readily conclude that $\Psi$ is an isometry for the intrinsic metrics. We need to remove a small subset $Z$ from $M$ to get the Gromov-Hausdorff closeness property above.

In order to prove that the map $\Psi$ above is, in fact, an isometry, we rely on a volume entropy comparison result, Theorem \ref{cle}. The latter roughly says that 
if $(M\setminus Z,g)$ is naturally Gromov-Hausdorff close to a metric space $(M,d)$ and if there is a $1$-Lipschitz map $\Psi$ from $(M,g_0)$ to $(M,d)$, then either $\Psi$ is an isometry or the volume entropy of $(M,g)$ is strictly  larger than $n-1$. To show this, we make use of the equidistribution of geodesic spheres in the unit tangent bundles of closed hyperbolic manifolds, a result  shown by Eskin-McMullen in \cite{EskinMcMullen93}.

Applying that comparison result to $(\spt S_\infty, d_\infty)$, we conclude that the map $\Psi$ above is an isometry. This yields the intrinsic flat stability result, Theorem \ref{intrinsic flat stability}. We conclude the proof of Theorem \ref{theorem:stable} by applying a lemma of Portegies \cite{Portegies15}: if a sequence of integral current spaces converges to a limit in the intrinsic flat topology without volume loss, then viewed as metric measure spaces the sequence converges to the limit in the Gromov-Prokhorov topology.

\vspace{1em}

\subsection*{Organisation}

Section \ref{prelimm} is about integral currents in metric spaces and maps between them. We prove a proposition answering in some cases  the following question: given a sequence of uniformly Lipschitz, almost Riemannian isometries converging to a limit map, what can we say about that limit map?

In Section \ref{section:Plateau}, we define the spherical Plateau problem for a closed oriented hyperbolic manifold. We introduce the barycenter map of Besson-Courtois-Gallot in our setting. 
Then we prove the intrinsic uniqueness of spherical Plateau solutions in Theorem \ref{theorem:unique}. 

In Section \ref{section:stability}, we show a technical theorem whose proof is closely related to that of Theorem \ref{theorem:unique}.
We  review an equidistribution result for geodesic spheres in the unit tangent bundle of hyperbolic manifolds, and how it implies a sharp comparaison theorem. Then, we apply the comparison theorem and the technical theorem to establish the volume entropy stability in terms of the intrinsic flat topology, which implies Theorem \ref{theorem:stable}.

\subsection*{Acknowledgements}
I am grateful to G\'{e}rard Besson, Gilles Courtois, Juan Souto, John Lott, Ursula Hamenst\"{a}dt, Ben Lowe and Demetre Kazaras for insightful discussions during the writing of this article. I would especially like to thank Cosmin Manea, Hyun Chul Jang,  Xingzhe Li and Dongming (Merrick) Hua for their careful reading, suggestions and for several corrections. 

A.S. was partially supported by NSF grant DMS-2104254. This research was conducted during the period A.S. served as a Clay Research Fellow.

\section{Limits of currents and limits of almost Riemannian isometries} \label{prelimm}

\subsection{Currents in metric spaces and Wenger's compactness theorem} \label{appendix a}

The theory of currents in metric spaces begins with works of De Giorgi, and Ambrosio-Kirchheim \cite{AK00}. It extends the theory of currents in finite dimensional manifolds due to De Giorgi, Federer-Fleming. For the most part, in this paper we will only stay in the standard framework of smooth maps and smooth manifolds. Nevertheless, a key reason for caring about metric currents is that this general theory enables to formulate powerful compactness results like  Theorem \ref{key compact} below. Besides, there is a profusion of standard tools (weak convergence, area/coarea formulae, slicing, push-forward...) for which the most natural language is given by geometric measure theory. 

The main references we will need on the theory of metric currents are \cite{AK00,AK00b,Wenger11,SW11}.
We reviewed in some details the main definitions and results of the theory in Section 1 of \cite{Antoine23a}. 
In this paper, metrics on metric spaces assume only finite values. Integral currents in complete metric spaces are, roughly speaking, a countable union of Lipschitz push-forwards of Borel subsets  in Euclidean spaces. They give a workable notion of ``generalized oriented submanifolds'' in complete metric spaces like Hilbert manifolds or Banach spaces. An $n$-dimensional  integral current $S$ has a well-defined notion of boundary $\partial S$ which is an $(n-1)$-dimensional  integral current, a notion of volume measure denoted by $\|S\|$ and a notion of total volume called mass $\mathbf{M}(S)$. Such a current $S$ is concentrated on a so-called canonical set $\set(S)$, itself included in the support $\spt(S)$ of the measure $\|S\|$. The restriction of $S$ to a Borel set $A$ is denoted by $S\llcorner A$, and its push-forward by a Lipschitz map $\phi$ is called $\phi_\sharp S$. With those notations, $\mathbf{M}(S\llcorner A) = \|S\|(A).$
See \cite[Section 3]{AK00}, see also \cite[Subsections 1.1, 1.2]{Antoine23a} for a review.

The space of integral currents in a given complete metric space is endowed with the weak topology and flat topology, and the latter is finer than the former, see \cite[Subsection 1.1]{Wenger07} \cite[Subsection 1.3]{Antoine23a}. The mass is lower semicontinuous with respect to convergence in those topologies \cite{AK00}.

The area formula expresses the mass of an integral current by its image under a Lipschitz map \cite[Section 8]{AK00b}, \cite[Section 9]{AK00}, \cite[Subsection 1.4]{Antoine23a}. The coarea formula, a kind of dual formula, expresses the mass of an integral current in terms of a double integral involving level sets of a Lipschitz map \cite[Section 9]{AK00b}, \cite[Subsection 1.4]{Antoine23a}. The slicing theorem is a kind of generalization of Sard's theorem and tells us that almost all level sets of a Lipschitz map are integral currents themselves \cite[Theorems 5.6 and 5.7]{AK00}.

Following  the notion of integral currents in complete metric spaces, 
one can define a more intrinsic notion of integral currents. That was achieved by Sormani-Wenger \cite{SW11}. Basically an integral current space is a triple $(X,d,S)$ where $(X,d)$ is a metric space and $S$ is an integral current in the completion of $(X,d)$, which we will usually denote by $\spt S$ (one requires that $X$ is the ``canonical set'' of the current $S$) \cite[Definition 1.3 and Subsection 1.1]{Antoine23a}.  
A simple example of integral current space is given by a closed, connected, oriented Riemannian $n$-manifold $(N,h)$: the metric space is $N$ endowed with the geodesic distance of $h$, and the integral current structure $\llbracket 1_N \rrbracket$  is the natural integral current induced by the fundamental class $[N]\in H_n(N;\mathbb{Z})$.

There is  also an intrinsic notion of flat topology, called intrinsic flat topology \cite{SW11}. Similarly to the definition of Gromov-Hausdorff topology, two integral current spaces are close in the intrinsic flat topology whenever they can be isometrically embedded in a common complete metric space in which they are close in the usual flat topology \cite[Definition 1.4]{Antoine23a} .

A key result is Wenger's compactness  theorem:
\begin{theo}\label{compactness Wenger} \cite{Wenger11}\cite[Theorem 4.19]{SW11} \label{key compact}
Given a sequence of boundaryless integral current spaces 
$$(X_m,d_m,S_m)$$ with uniformly bounded mass and diameter, there is a subsequence converging to an integral current space in the intrinsic flat topology. 
\end{theo}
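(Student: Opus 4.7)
The plan is to reduce the intrinsic flat compactness statement to the Ambrosio-Kirchheim compactness theorem for integral currents in a fixed complete metric space. The principal difficulty is that the spaces $(X_m,d_m)$ are a priori unrelated, so we must first embed them simultaneously into a common complete metric space while keeping both the masses and the supports of the currents under uniform control.

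First I would embed each $(X_m,d_m)$ isometrically via the Kuratowski embedding into $\ell^\infty(X_m)$. This alone is not enough, since we have no compactness of the ambient space. The crucial step is to produce, for each $m$, an $(n+1)$-dimensional filling of $S_m$. Because $\partial S_m = 0$ and $\mathbf{M}(S_m)$ is uniformly bounded, Wenger's isoperimetric inequality for integral currents in complete metric spaces yields an integral current $V_m$, supported in a controlled enlargement of $X_m$ (for instance in a metric cone over $X_m$), with $\partial V_m = S_m$ and mass bound
\begin{equation*}
\mathbf{M}(V_m) \leq C\,\mathbf{M}(S_m)^{(n+1)/n}.
\end{equation*}
This filling is what converts mere bounds on $(S_m)$ into a usable compactness statement for the underlying spaces.

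Next I would use the uniform mass bound on $V_m$ together with the diameter bound to show that the ``thickened supports'' $Y_m := \spt(S_m) \cup \spt(V_m)$ form a uniformly totally bounded family in the Gromov-Hausdorff sense, i.e.\ there exist covering number bounds at each scale independent of $m$. Heuristically, an $(n+1)$-current of bounded mass cannot spread too thinly, which gives quantitative density estimates for its support at every scale. Gromov precompactness then produces a subsequence along which $Y_m$ converges in the Gromov-Hausdorff topology, and standard arguments let one embed the whole convergent sequence isometrically into a common compact metric space $Z$ and thence into the separable Banach space $\ell^\infty(Z)$.

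Finally, inside this common ambient Banach space the sequence $S_m$ is a family of integral currents with uniformly bounded mass and uniformly bounded support. Ambrosio-Kirchheim's compactness theorem for integral currents in a fixed complete metric space then delivers a further subsequence converging in the flat topology to an integral current $S_\infty$. Setting $X_\infty := \set(S_\infty)$, endowing it with the restricted metric $d_\infty$, and taking the current structure $S_\infty$, we obtain an integral current space; flat convergence in the common ambient space translates by definition into intrinsic flat convergence of $(X_m,d_m,S_m)$ to $(X_\infty,d_\infty,S_\infty)$. The main obstacle throughout is the filling step: the whole compactness argument ultimately rests on Wenger's isoperimetric inequality for integral currents in metric spaces, which is what converts the analytic mass bound into the metric precompactness needed to pass to a common ambient space.
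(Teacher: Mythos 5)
There is a genuine gap at the step where you claim that the uniform mass bound on the fillings $V_m$, together with the diameter bound, forces the thickened supports $Y_m = \spt(S_m)\cup \spt(V_m)$ to be uniformly totally bounded. This implication is false. A sequence of integral currents of uniformly bounded mass, diameter, and boundary mass can have supports whose $\varepsilon$-covering numbers are unbounded in $m$: think of a sequence of closed surfaces with an increasing number of thin ``tentacles'' or splinters of fixed diameter but rapidly shrinking girth. The heuristic ``an $(n+1)$-current of bounded mass cannot spread too thinly'' holds only under a lower density bound (for instance for mass minimizers, where monotonicity/isoperimetric arguments give uniform lower bounds on $\|T\|(B(x,r))/r^n$), and no such bound is available for the arbitrary $S_m$ in the statement, nor for the fillings $V_m$ you construct. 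Consequently you cannot pass to a common compact ambient space $Z$ and simply invoke Ambrosio--Kirchheim compactness; the whole point of Wenger's theorem is that intrinsic flat subconvergence holds even when Gromov--Hausdorff subconvergence of the supports fails.

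Wenger's actual argument (which this paper invokes, see the use of the decomposition in Lemma \ref{in a common Banach}) does not try to prove total boundedness of $\spt(S_m)$. Instead, for each $m$ and each scale he performs a thick--thin decomposition of the current: the thick part, where a local lower density estimate does hold (obtained by a covering argument combined with the isoperimetric inequality), has equi-compact support; the thin part has small mass and, crucially, small filling volume, so it is small in the flat distance. Iterating over scales yields $S_m = F_m^1 + \cdots + F_m^k + R_m^k$ with each family $\{\spt(F_m^q)\}_m$ equi-compact and $\mathbf{d}_{\mathcal F}(R_m^k,\mathbf 0)$ uniformly small for $k$ large, and one then extracts flat limits of the equi-compact pieces in a common ambient space by a diagonal argument and sums them. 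The isoperimetric inequality is therefore used to discard a negligible remainder, not to bootstrap a mass bound into Gromov precompactness of the entire support, which is where your argument breaks down.
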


\subsection{Limits of almost Riemannian isometries and  intrinsic flat limit spaces}

As usual, inside an $n$-dimensional Riemannian manifold, we will denote by $\Vol$ and $\Area$ the $n$-dimensional and $(n-1)$-dimensional Hausdorff measure. Sometimes, we also use  $\mathcal{H}^k$ to denote the $k$-dimensional Hausdorff measure. 
Given a metric on a space, the standard notion of induced intrinsic metric is defined in \cite[Chapter 2, Section 2.3]{BBI22}. 
If $h$ is a Riemannian metric on a manifold $N$, let $\dist_{h}$ be the metric on $M$ induced by $g$. Sometimes we will make the identification
$$(N,h) = (N,\dist_h).$$
We will use a few times the following simple fact: if $(M,g)$ is a compact Riemannian $n$-manifold with a piecewise smooth metric $g$ inducing $\dist_g$, 
 then for any metric $d$ whose induced intrinsic  metric is $\dist_g$, and any open subset $\Omega\subset M$, the  mass of $\llbracket 1_\Omega \rrbracket$ as an $n$-dimensional current in $(M,d)$ is at most $\Vol(\Omega,g)$.

\begin{lemme}\label{lemma:limit map} \cite{Sor18, GCS23}
Let $(E_1,d_1),(E_2,d_2)$ be two complete metric spaces. Let $S_i$ be a sequence of integral currents in $(E_1,d_1)$ and let 
$$\varphi_i:\spt S_i \to (E_2,d_2)$$ be a sequence of $\lambda$-Lipschitz maps for some $\lambda>0$ independent of $i$. Suppose that $S_i$ (resp. $(\varphi_i)_\sharp S_i$) converges in the flat topology to an integral current $S_\infty$ (resp. $T_\infty$) inside $(E_1,d_1)$ (resp. inside $(E_2,d_2)$), and that 
$(E_2,d_2)$ is compact.

Then there is a $\lambda$-Lipschitz map 
$$\varphi_\infty:(\spt S_\infty,d_\infty)\to (E_2,d_2)$$ such that:
\begin{enumerate}
\item after taking a subsequence if necessary, for any positive integer $m$ and any collection of $m$ points $\{x_{\infty,1},...,x_{\infty,m}\} \subset \spt S_\infty$, there is a sequence of collections of $m$ points $\{x_{i,1},...,x_{i,m}\} \subset N_i$ such that  for each $j\in \{1,...,m\}$,  as $i\to \infty$, $x_{i,j}$ converges to $x_{\infty,j}$, and $\varphi_i(x_{i,j})$ converges to $\varphi_\infty(x_{\infty,j})$,
\item $(\varphi_\infty)_\sharp S_\infty = T_\infty$ as currents inside $(E_2,d_2)$.
\end{enumerate}

\end{lemme}

\begin{proof}
(1) is \cite[Theorem 6.1]{Sor18}, and is proved using an Arzel\`{a}-Ascoli type argument.

(2) follows from a slight generalization of \cite[Lemma 7.3]{GCS23}. If 
$L^\infty(E_2)$ is the Banach space of bounded real functions on $E_2$ endowed with the $L^\infty$ norm, then it is well-known that $(E_2,d_2)$ embeds isometrically inside $L^\infty(E_2)$ by the Kuratowski embedding, and  $L^\infty(E_2)$ is an injective metric space in the following sense: given any other metric space $Y$, a subset $A \subset Y$, and a $\lambda$-Lipschitz map $\phi : A \to  L^\infty(E_2)$, there exists an extension of $\phi$, called $\tilde{\phi} : Y \to L^\infty(E_2)$, which is still $\lambda$-Lipschitz. We can adapt the proof of \cite[Lemma 7.3]{GCS23} by using that extension theorem, instead of McShane's extension theorem.
\end{proof}

For this subsection, we will make the following assumption.

\begin{assumption} \label{assump}
Let $(N,h)$ be a connected, closed, oriented Riemannian $n$-manifold.
Let $S_i$ be a sequence of integral currents in a complete metric space $(E,d)$, converging in the flat topology to an integral current $S_\infty$ inside $(E,d)$.
Suppose that 
\begin{enumerate}[label=(\alph*)]
\item each support $N_i:=\spt S_i$, endowed with the intrinsic metric induced by the metric $d$, is a compact, oriented Riemannian manifold $(N_i,h_i)$ with a  piecewise smooth metric $h_i$ (possibly with nonempty piecewise smooth boundary),
\item $\lim_{i\to \infty} \Area(\partial N_i,h_i) =0$, 
\item there is a sequence of maps 
$$\varphi_i: (N_i,d\vert_{N_i}) \to (N,\dist_h)$$ which are $C^1$ on the smooth part of $N_i$ and $\lambda$-Lipschitz for some $\lambda>0$ independent of $i$, such that $(\varphi_i)_\sharp(S_i)$ converges to $\llbracket 1_N\rrbracket$ in the flat topology inside $(N,h)$,
\item  there is a sequence of open subsets $R_i$ contained in the part of $N_i$ where $h_i$ is smooth, such that 
 $\lim_{i\to \infty} \Vol(N_i\setminus R_i,h_i)  = 0$ and $\lim_{i\to \infty} \Vol(R_i,h_i)  = \Vol(N,h)$,
\item moreover, $\varphi_i$ is almost a Riemannian isometry on $R_i$ in the sense that
 $$\lim_{i\to \infty}  \big{\|}\sum_{u,v=1}^n | h(d\varphi_i(e'_u),d\varphi_i(e'_v)) - \delta_{uv} | \big{\|}_{L^\infty(R_i)}=0,$$
where $\{e'_u\}_{u=1}^n$ denotes any choice of orthonormal bases for the tangent spaces of $(N_i,h_i)$.
\end{enumerate}
\end{assumption}

Some of the conditions above are unnecessarily restrictive, but they will be convenient for our applications. Note that Lemma \ref{lemma:limit map} applies under Assumption \ref{assump} and yields a limit map
$$\varphi_\infty: \spt S_\infty \to (N,\dist_h).$$

The following proposition, while elementary, is technically important for us. It is related to, but different from  Lipschitz-volume rigidity results like \cite[Proposition C.1]{BCG95}, \cite[Sections 3, 4, 5]{BBCG12}, \cite[Theorem 1.1]{DNP23}, \cite[Theorem 1.1]{GCS23} and \cite[Theorem 1.2]{Zus23}.

\begin{prop} \label{proposition:limit map}
Suppose that Assumption \ref{assump} above holds and let 
$$\varphi_\infty: \spt S_\infty \to (N,\dist_h)$$ be the limit map constructed in Lemma \ref{lemma:limit map}.
\begin{enumerate}
\item
Then $\varphi_\infty$ is a bi-Lipschitz bijection and its inverse $\varphi_\infty^{-1}:(N,\dist_h) \to \spt S_\infty $ is $1$-Lipschitz with respect to the induced intrinsic metrics. 

\item Suppose additionally that for any $\epsilon>0$, there is $r_\epsilon>0$ such that if $i$ is large enough, then for any $x,y\in N_i$ such that $d(x,y)<r_\epsilon$, we have
$$\dist_{h}(\varphi_i(x),\varphi_i(y)) \leq (1+\epsilon)d(x,y).$$ 
Then $\varphi_\infty$ is an isometry with respect to the induced intrinsic metrics. 
\end{enumerate}
\end{prop}

\begin{remarque}
The limit map $\varphi_\infty$ in (1) is not $1$-Lipschitz for the intrinsic metrics  in general, which means that the additional condition in (2) is needed. Indeed consider for instance the standard round metric $g_{\mathrm{Eucl}}$ on the Euclidean unit sphere $S^2$, and for each $i>0$, consider the conformal metric $g_i:=f^2.g_{\mathrm{Eucl}}$ where $f:S^2\to [\frac{1}{2},1]$ is $1$ outside the $\frac{1}{i}$-neighborhood of the equator and $\frac{1}{2}$ in the $\frac{1}{2i}$-neighborhood of the equator. Let $\dist_{g_{\mathrm{Eucl}}}$ and $\dist_{g_i}$ be the corresponding intrinsic metrics. Then Assumption \ref{assump} is satisfied with $\varphi_i$ being  the identity map $\id:(S^2,\dist_{g_i})\to (S^2,\dist_{g_{\mathrm{Eucl}}})$, etc. However, the intrinsic flat limit and Gromov-Hausdorff limit of $(S^2,g_i)$ are both determined  by the length structure $L$ on $S^2$ induced by $\dist_{g_{\mathrm{Eucl}}}$ for curves not touching the equator, and with an equator of length $\pi$ instead of $2\pi$. The limit $\varphi_\infty$ is still  the identity map $\id:(S^2,L)\to (S^2,\dist_{g_{\mathrm{Eucl}}})$ and it is not $1$-Lipschitz for the intrinsic metrics.
\end{remarque}

\begin{proof}
Property (2) follows directly from property (1) in the statement and Lemma \ref{lemma:limit map} (1).  Indeed, applying the additional assumption in (2) with arbitrarily small $\epsilon$, together with Lemma \ref{lemma:limit map} (1), we obtain that $\varphi_\infty$ does not increase distances for the intrinsic metrics, in other words it is $1$-Lipschitz for the intrinsic metrics. Since property (1) says that the inverse of $\varphi_\infty$ is also $1$-Lipschitz for the intrinsic metrics, it is an isometry.

It remains to prove property (1). Note that by Lemma \ref{lemma:limit map} (1), $\varphi_\infty$ is $\lambda$-Lipschitz. 
Let $L_d$ be the intrinsic metric on $\spt S_\infty$ induced by the restricted metric $d\vert_{S_\infty}$ (a priori $L_d$ is allowed to take $\infty$ as value). Note that by Assumption \ref{assump} (c) (d) (e), the area formula and the lower semincontinuity of mass under flat convergence, we can assume that $\varphi_i$ is injective on $R_i$ without loss of generality by reducing that domain a bit.

For $\eta>0$, set
$$O_\eta:= \text{$\eta$-neighborhood of $\spt S_\infty$ inside $(E,d)$}.$$  
Then for every $\eta>0$,
\begin{equation}\label{eta to 0}
\lim_{i\to \infty}\|S_i\| (E\setminus O_{\eta})  =0,
\end{equation}
Indeed, let us assume  on the contrary that for some $\eta>0$, $\liminf_{i\to \infty}\|S_i\| (E\setminus O_{\eta})>0$. Then, by Assumption \ref{assump} (a) (d) (e), we should have 
\begin{align*}
 \liminf_{i\to \infty} \mathbf{M}((\varphi_i)_\sharp (S_i\llcorner O_\eta)) &=  \liminf_{i\to \infty} \mathbf{M}(S_i\llcorner O_\eta) \\
 & < \lim_{i\to \infty}\mathbf{M}(S_i) \\
 & = 
 \lim_{i\to \infty} \mathbf{M}((\varphi_i)_\sharp S_i) = \Vol(N,h).
 \end{align*}
By a standard application of the slicing theorem, we can assume without loss of generality that  the restricted current $S_i\llcorner (E\setminus O_{\eta})$ is an integral current converging to $0$ in the flat topology as $i\to \infty$. Thus $(\varphi_i)_\sharp (S_i\llcorner O_\eta)$ still converges to $\llbracket 1_N\rrbracket$ in the flat topology. By Assumption \ref{assump} (c) and lower semicontinuity of the mass with respect to flat or weak convergence,
$$\liminf_{i\to \infty} \mathbf{M}((\varphi_i)_\sharp (S_i\llcorner O_\eta))  \geq \mathbf{M}(\llbracket 1_N\rrbracket) = \Vol(N,h).$$ This contradicts the previous inequality and so (\ref{eta to 0}) was true.

Given a Lipschitz curve $\omega$ in $(E,d)$, let $\length_{(E,d)}(\omega)$ denote its length with respect to the metric $d$. Next, it is convenient to show the following ``curve lifting'' property.

\textbf{Curve lifting property:} Let $\eta>0$. Let $x,y\in \spt S_\infty$ and let 
$$l:=\dist_{h}(\varphi_\infty(x),\varphi_\infty(y)).$$ Then there exists 
a compact connected Lipschitz curve $\omega$ contained in $O_{\eta}$, starting at $x$, ending at $y$, and moreover 
$$\length_{(E,d)}(\omega)\leq l +\eta. $$

\begin{proof}[Proof of the curve lifting property]

We fix an $\eta>0$.
Let $b_x$, $b_y$ be the metric balls in $(E,d)$, of radius $\eta'>0$ chosen later, centered at $x,y\in \spt S_\infty$. By Lemma \ref{lemma:limit map} (1) and the fact that the $\varphi_i$ are assumed to be $\lambda$-Lipschitz (Assumption \ref{assump} (c)), for all $i$ large and every $q\in b_x \cap \spt S_i$ (resp. $q\in b_y\cap \spt S_i$), we have 
$$\dist_{h}(\varphi_i(q), \varphi_\infty(x)) \leq 2\eta'\lambda \quad \text{(resp. $\dist_{h}(\varphi_i(q), \varphi_\infty(y)) \leq 2\eta'\lambda$).}$$
By lower semicontinuity of the mass, for each $i$ large, 
$$\|S_i\|(b_x) >2\kappa \quad \text{and} \quad  \|S_i\|(b_y) >2\kappa$$ for some $\kappa>0$ depending on $\eta',x,y$ but independent of $i$.  For $i$ large, since we are assuming that $\varphi_i$ is injective on $R_i$, by Assumption \ref{assump} (d) (e) and the area formula, we have the following volume estimates:
\begin{equation}\label{quite important}
\begin{split}
\mathcal{H}^n\big(\varphi_i( R_i \cap b_x  )\big) & \geq \kappa,\\ \quad \mathcal{H}^n\big(\varphi_i( R_i \cap b_y  )\big) &\geq \kappa.
\end{split}
\end{equation}

Let us choose $\eta'$ small so that $2\eta'\lambda <r_1$ where $r_1$ is defined in Lemma \ref{tedii} (stated at  the end of this subsection) and depends on our fixed $\eta>0$.
Applying Lemma \ref{tedii} to $p=\varphi_\infty(x)$, $q=\varphi_\infty(y)$, 
$A_i=\varphi_i( R_i \cap b_x  )$, $B_i=\varphi_i( R_i \cap b_y  )$, we find for each $i$ large enough, two points 
$$y_{1,i} \in R_i \cap b_x,\quad y_{2,i} \in R_i \cap b_y$$ 
 and a smooth curve 
 $$\sigma_i \subset  N$$ with 
 $\length_h(\sigma_i)\leq l+\frac{\eta}{3}$, joining $\varphi_i(y_{1,i})$ to $\varphi_i(y_{2,i})$
such that
the restricted preimage 
$$\varkappa_i:=(\varphi_i)^{-1}(\sigma_i)$$ is a 
compact curve in $N_i$ avoiding $\partial N_i$,
whose endpoints satisfies 
$$\varphi_i(\partial \varkappa_i) \subset \varphi_i(y_{1,i}) \cup \varphi_i(y_{2,i}).$$
Because  $y_{1,i}$ and $y_{2,i} $ belong to $R_i$ (on which $\varphi_i$ is assumed to be injective), in fact
\begin{equation}\label{boundary R}
\partial \varkappa_i = \{y_{1,i},y_{2,i}\}.
\end{equation} 
By Assumption \ref{assump} (c) (d) (e), the restriction of $S_i$ to the complement of $R_i$ has mass converging to $0$ as $i\to \infty$; similarly, by (\ref{eta to 0}), the restriction of $S_i$ to the complement of $O_{\eta/2}$ has mass converging to $0$.
Thus, the second part of Lemma \ref{tedii} ensures that we can find such $\varkappa_i $ satisfying additionally
\begin{equation}\label{limlim}
\begin{split}
\lim_{i\to \infty}\mathcal{H}^1(\varkappa_i \setminus R_i) = 0, \quad \lim_{i\to \infty}\mathcal{H}^1(j_i(\varkappa_i) \setminus O_{\eta/2}) = 0.
\end{split}
\end{equation}
Together with Assumption \ref{assump} (e) and the area formula, these properties imply:
\begin{equation} \label{h1m}
\mathcal{H}^1(\varkappa_i) \leq (1+\epsilon_i)(l+\eta/2) +\epsilon_i
\end{equation}
where $\lim_{i\to \infty}\epsilon_i =0$. 
By using (\ref{limlim}) and the fact that $y_{1,i}$ (resp. $y_{2,i}$) is in $b_x$ (resp. $b_y$),
we easily construct a new curve $\omega_i$ fully contained in $O_\eta$ joining $x$ to $y$, with length at most $l+\eta$ for $i$ large. This proves the curve lifting property.
\end{proof}

\vspace{1em}

The curve lifting property implies the following  useful properties.
Firstly, $\spt S_\infty$ is compact.
Suppose towards a contraction that $\spt S_\infty$ is not compact, then for some $r'\in(0,1)$, there is an infinite sequence of points $\{x_m\}_{m\geq 0} \subset \spt S_\infty$ such that those points are pairwise at distance at least $r'$ in $(E,d)$. By compactness of $N$, for any $\epsilon>0$ there are $m_1\neq m_2$ such that $$\dist_{h}(\varphi_{\infty}(x_{m_1}),\varphi_{\infty}(x_{m_2}))\leq \epsilon.$$ Then the curve lifting property implies that the distance between  $x_{m_1}$ and $x_{m_2}$ is at most $\epsilon$, a contradiction when $\epsilon< r'/2$.

Secondly  $\varphi_{\infty} : \spt S_\infty \to N$ is bijective. 
Indeed we verify that $\varphi_{\infty}$ is injective by a direct application of the curve lifting property. Surjectivity follows from Lemma \ref{lemma:limit map} (2) and the compactness of $\spt S_\infty$.

We are ready to prove property (1) of our proposition. Take two points $u,v\in N$ and let $\eta>0$. Let $x:= \varphi_{\infty}^{-1}(u)$, $y:=\varphi_{\infty}^{-1}(v)$.
By applying the curve lifting property repeatedly and making $\eta\to 0$, by compactness of $\spt S_\infty$ we get a limit Lipschitz curve in $\spt S_\infty$ joining $x$ to $y$, with length at most $\dist_{h}(u,v)$.
Thus the inverse $\varphi_{\infty}$ is indeed $1$-Lipschitz for the intrinsic metrics, and $\varphi_\infty$ is bi-Lipschitz, as wanted.

\end{proof}

Below is a lemma based on the coarea formula, which was applied in the proof of Proposition \ref{proposition:limit map}.

\begin{lemme} \label{tedii}
Let $\eta>0$ and consider $p,q\in (N,h)$. Under Assumption \ref{assump}, if $r_1>0$ is small enough then the following holds. 
For each $i$, let $A_i, B_i$ be regions in $N$  contained in the $r_1$-neighborhoods of $p$ and $q$  respectively,  and  such that $\mathcal{H}^{n}(A_i)\geq\kappa$ and  $\mathcal{H}^{n}(B_i)\geq\kappa$ for some $\kappa>0$ independent of $i$. 
Then, there is a smooth curve $\sigma_i\subset N$ with $\length_h(\sigma_i)\leq \dist_h(p,q)+\frac{\eta}{10}$, whose endpoints $p_i,q_i$ are in $A_i,B_i$ respectively, such that  
 the preimage 
$$\varkappa_i := (\varphi_i)^{-1}(\sigma_i)$$ 
is a compact smooth curve (with possibly several connected components) in $N_i$ avoiding $\partial N_i$, and such that all its endpoints are sent  by $\varphi_i$ to $\{p_i,q_i\}$.

Moreover, if $Q_i\subset (N_i,h_i)$ are open regions whose $h_i$-volumes converge to $0$ as $i\to \infty$, then 
$\sigma_i$ can additionally be chosen so that the $h_i$-length of $\varkappa_i \cap Q_i$ tends to $0$ as $i\to \infty$.
\end{lemme}

\begin{proof}
There is a length-minimizing (thus necessarily embedded) geodesic segment $\gamma_{pq}$ in $(N,h)$ joining $p$ to $q$. 
Given $\eta>0$, we can find a Lipschitz diffeomorphism  $\Phi_\eta$
independent of $i$, from a neighborhood $U_{\gamma_{pq}}$ of $\gamma_{pq}$ to $[0,3]\times [0,1]^{n-1}$:
$$\Phi_\eta :U_{\gamma_{pq}} \to [0,3]\times [0,1]^{n-1}$$
and satisfying the following properties. 
It sends $p$ (resp. $q$) to $(\frac{1}{2},\frac{1}{2},...,\frac{1}{2})\in [0,1]\times[0,1]^{n-1}$ (resp. $(\frac{5}{2},\frac{1}{2},...,\frac{1}{2}) \in [2,3]\times[0,1]^{n-1}$), and for any $x\in [0,1]^{n-1}$,  
\begin{equation}\label{hgt}
\length_h(\Phi_\eta^{-1}([0,3]\times\{x\})) \leq \length_h(\gamma_{pq})+\frac{\eta}{100}.\end{equation}

Let $r_0>0$ be a small constant to be fixed later.
Fix a radius $r_1>0$ so small that the $r_1$-balls in $(N,h)$ around $p$ and $q$ are sent by $\Phi_\eta$ in the $r_0$-neighborhood of $\Phi_\eta(p)$ and $\Phi_\eta(q)$ respectively. 
Due to Assumption \ref{assump} (b) (c), 
\begin{equation}\label{qaz}
\lim_{i\to \infty} \mathcal{H}^{n-1}(\Phi_\eta( \varphi_i(\partial \Sigma_i)\cap U_{\gamma_{pq}})) =0.
\end{equation}
If $A_i,B_i\subset N$ are as in the statement, then for some $\kappa_0>0$ independent of $i$, for all $i$:
$$\quad \mathcal{H}^n(\Phi_\eta(A_i)) \geq \kappa_0, \quad \mathcal{H}^n(\Phi_\eta(B_i)) \geq \kappa_0.$$
Let $\mathrm{proj}:\mathbb{R}^n \to \mathbb{R}^{n-1}$ be the projection on the last $n-1$ coordinates. By Fubini's theorem, for each $i$ we can find a vector $\overrightarrow{t_i}\in \{0\}\times [-1,1]^{n-1}$ with  $|t_i|<r_0$ such that if we set
\begin{align*}
\mathcal{W}(\Phi_\eta(A_i), \Phi_\eta(B_i),t_i):=\{x\in [0,1]^{n-1}; \quad &   \mathcal{H}^1(\mathrm{proj}^{-1}(x) \cap \Phi_\eta(A_i)) >0 \text{ and}\\
&   \mathcal{H}^1(\mathrm{proj}^{-1}(x) \cap (\Phi_\eta(B_i)+\overrightarrow{t_i})) >0 \} 
\end{align*}
then we have
$$\mathcal{H}^{n-1}(\mathcal{W}(\Phi_\eta(A_i), \Phi_\eta(B_i),t_i) )>\kappa_1 $$
for some $\kappa_1>0$ independent of $i$. 
Since $|t_i|<r_0$, there is for each $i$ a diffeomorphism $F_i$ of $[0,3]\times [0,1]^{n-1}$, whose biLipschitz constant is bounded by $1+Cr_0$ for some constant $C>0$ independent of $i$, and such that 
\begin{equation}\label{zxc}
\mathcal{H}^{n-1}(\mathcal{W}(F_i\circ \Phi_\eta(A_i), F_i\circ \Phi_\eta(B_i),0) )>\kappa_1.
\end{equation}
Consider now 
$$\varphi'_i:= F_i \circ \Phi_\eta \circ \varphi_i.$$
These maps are uniformly Lipschitz, independently of $i$.
Using (\ref{zxc}), (\ref{qaz}) and applying the coarea formula and Sard's theorem twice, 
first to the maps $\mathrm{proj} \circ \varphi'_i$, then to the map $(x_1,...x_n)\mapsto x_1$, we find for each $i$ some straight segment $\hat{\sigma}_i$ in $[0,3]\times [0,1]^{n-1}$ joining $a_i\in F_i \circ \Phi_\eta(A_i)$ to $b_i\in F_i \circ \Phi_\eta(B_i)$, such that $\mathrm{proj}(\hat{\sigma}_i)$ is a point in $[0,1]^{n-1}$, and:
\begin{itemize}
\item $\sigma_i:=(F_i \circ \Phi_\eta)^{-1}(\hat{\sigma}_i)$
is a smooth connected curve in $(N,h)$ with endpoints in $A_i, B_i$ respectively. By (\ref{hgt}) and a compactness argument, its length is at most $\dist_h(p,q)+\frac{\eta}{10}$ if $r_0$ is chosen small enough to make the biLipschitz constants of $F_i$ close enough to $1$.
\item
$\varkappa_i := (F_i \circ \Phi_\eta \circ \varphi_i)^{-1}(\hat{\sigma}_i) =  (\varphi_i)^{-1}(\sigma_i) \subset N_i$
is a compact smooth curve (with possibly several connected components) which avoids $\partial\Sigma_i$ for all $i$ large enough: $\varkappa_i \cap  \partial \Sigma_i=\varnothing.$
\end{itemize}
Given our $\eta>0$, we choose $r_0$ small enough so that the first bullet is satisfied. Then the first part of the lemma holds when $r_1$ is small enough.

For the second part of the lemma, we apply the coarea formula again to $\varphi'_i$, which ensures that as $i\to \infty$, for some choice of $\hat{\sigma}_i$, $\varkappa_i$ intersects $Q_i$ on a set of arbitrarily small $h_i$-length since the $h_i$-volumes of $Q_i$ converge to $0$.

\end{proof}

\section{The spherical Plateau problem for hyperbolic manifolds} \label{section:Plateau}

\subsection{The spherical Plateau problem} \label{subsection:platee}

Let us define the spherical Plateau problem for closed oriented hyperbolic manifolds, which is part of a more general framework \cite[Section 3]{Antoine23a}.
Let $M$ be a closed oriented hyperbolic manifold, whose fundamental group is denoted by $\Gamma$. Let $S^\infty$ be the unit sphere in $\ell^2(\Gamma)$. The $\ell^2$-norm induces a Hilbert Riemannian metric $\mathbf{g}_{\mathrm{Hil}}$ on $S^\infty$. The group $\Gamma$ acts isometrically on $S^\infty$ by the (left) regular representation $\lambda_\Gamma:\Gamma\to \End(\ell^2(\Gamma))$: for all $\gamma\in \Gamma$, $x\in\Gamma$, $f\in S^\infty$,
$$(\lambda_\Gamma(\gamma).f)(x) := f(\gamma^{-1}x).$$
Since $\Gamma$ is infinite and torsion-free, $\Gamma$ acts properly and freely on the infinite dimensional sphere $S^\infty$. The quotient space $S^\infty/\lambda_{\Gamma}(\Gamma)$ is topologically a classifying space for $\Gamma$. It is also a Hilbert manifold endowed with the induced Hilbert Riemannian metric $\mathbf{g}_{\mathrm{Hil}}$. The diameter of $(S^\infty/\lambda_{\Gamma}(\Gamma),\mathbf{g}_{\mathrm{Hil}})$ is bounded from above by $\pi$.

Given base points $p_0\in M$, $q_0\in S^\infty/\lambda_{\Gamma}(\Gamma)$, there is a smooth immersion $M\to S^\infty/\lambda_{\Gamma}(\Gamma)$ inducing the identity map from $\pi_1(M,p_0)$ to $\pi_1(S^\infty/\lambda_{\Gamma}(\Gamma),q_0)$, which is unique up to homotopies sending $p_0$ to $q_0$. Other choices of $p_0,q_0$ yield homotopic maps, so that determines a unique homotopy class of maps which we call ``admissible''.
Set
$$\mathscr{H}_M := \{\phi : M \to  S^\infty/\lambda_{\Gamma}(\Gamma);\quad \text{ $\phi$ is an admissible smooth immersion}\}.$$
Any map $\phi\in \mathscr{H}_M$ defines the pull-back Riemannian metric $\phi^*(\mathbf{g}_{\mathrm{Hil}})$ on $M$.

Besson-Courtois-Gallot introduced the spherical volume of $M$ in \cite{BCG91}. It can be equivalently be defined as follows.
\begin{definition} \label{premiere def}
The \emph{spherical volume} of $M$ is defined as
$$\spherevol(M) := \inf\{\Vol(M,\phi^*(\mathbf{g}_{\mathrm{Hil}})); \quad \phi \in\mathscr{H}_M\}.$$
\end{definition}

The spherical volume of closed oriented hyperbolic manifolds was computed by Besson-Courtois-Gallot.
See \cite[Theorem 4.1]{Antoine23a} for the proof, adapted to our setting.
\begin{theo} \cite{BCG95,BCG96} \label{theorem:bcg}
Let $(M,g_0)$ be a closed oriented  hyperbolic manifold. Then 
\begin{equation} \label{bcg}
\spherevol(M) = \Vol(M,\frac{(n-1)^2}{{4n}}g_0).
\end{equation}
\end{theo}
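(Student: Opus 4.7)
The plan is to prove two matching inequalities, following the blueprint of Besson--Courtois--Gallot translated into the present $\ell^2(\Gamma)$ framework.

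\textbf{Lower bound $\spherevol(M)\geq \Vol(M,\tfrac{(n-1)^2}{4n}g_0)$.} Given any $\phi\in\mathscr{F}(M)$, I would first perturb $\phi$ through an arbitrarily small $C^1$-homotopy in $\mathscr{F}(M)$ so that $\phi(M)\subset\mathbb{S}^+/\Gamma$: this is possible because the two conditions defining $\mathbb{S}^+$ cut out an open dense subset of $\mathcal{S}(\Gamma)$, and such a perturbation affects $\Vol(M,\phi^*\mathbf{g}_{\mathrm{Hil}})$ by an arbitrarily small amount. The composition $F:=\mathrm{Bar}\circ\phi\colon M\to M$ is then smooth on the full-measure set where $\mathrm{Bar}(\phi(x))\notin\{\gamma\cdot o\}_{\gamma\in\Gamma}$. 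Admissibility of $\phi$ together with the $\Gamma$-equivariance of $\mathrm{Bar}$ (via its equivariant lift $\mathbb{S}^+\to\tilde M$) forces $F$ to induce the identity on $\pi_1(M)$; since $M$ is aspherical, $F$ is homotopic to $\mathrm{id}_M$ and has topological degree one. Combining the area formula for $F\colon(M,\phi^*\mathbf{g}_{\mathrm{Hil}})\to(M,g_0)$ with the pointwise Jacobian estimate of Lemma~\ref{astuce} gives
\[
\Vol(M,g_0)\;\leq\;\int_M |\Jac F|\,\dvol_{\phi^*\mathbf{g}_{\mathrm{Hil}}}\;\leq\;\Bigl(\tfrac{4n}{(n-1)^2}\Bigr)^{n/2}\Vol(M,\phi^*\mathbf{g}_{\mathrm{Hil}}),
\]
which rearranges to $\Vol(M,\phi^*\mathbf{g}_{\mathrm{Hil}})\geq \Vol(M,\tfrac{(n-1)^2}{4n}g_0)$. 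Taking the infimum over $\phi\in\mathscr{F}(M)$ delivers the lower bound.

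\textbf{Upper bound $\spherevol(M)\leq \Vol(M,\tfrac{(n-1)^2}{4n}g_0)$.} For this I would construct an explicit one-parameter family of admissible maps saturating the bound in the limit. Fix $o\in\tilde M$ and, for each $c>n-1$, set
\[
\tilde\phi_c(x)(\gamma)\;:=\;N_c(x)^{-1/2}\,e^{-c\,d_{g_0}(x,\gamma\cdot o)/2},\qquad N_c(x):=\sum_{\gamma\in\Gamma}e^{-c\,d_{g_0}(x,\gamma\cdot o)};
\]
the Poincar\'e series $N_c(x)$ converges because $c$ exceeds the critical exponent $n-1$ of $\Gamma$ acting on $(\tilde M,g_0)$. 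A direct calculation using $N_c(\eta\cdot x)=N_c(x)$ shows that $\tilde\phi_c$ is $\Gamma$-equivariant under the left regular representation, so it descends to an admissible smooth immersion $\phi_c\in\mathscr{F}(M)$. Differentiating in $x$, one finds that for $v\in T_x\tilde M$,
\[
\bigl|\,d\tilde\phi_c|_x(v)\,\bigr|_{\ell^2(\Gamma)}^{2}\;=\;\tfrac{c^2}{4}\,\mathrm{Var}_{\mu_{c,x}}\bigl[\,\langle v,\vec n_\gamma(x)\rangle\,\bigr],
\]
where $\vec n_\gamma(x)\in T^1_x\tilde M$ is the unit tangent at $x$ pointing toward $\gamma\cdot o$ and $\mu_{c,x}(\gamma):=|\tilde\phi_c(x)(\gamma)|^2$ is the associated probability measure on $\Gamma$.

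\textbf{Main obstacle.} The technical core of the argument is the asymptotic analysis as $c\downarrow n-1$. One needs to show that the pushforward of $\mu_{c,x}$ to the unit sphere $T^1_x\tilde M\cong S^{n-1}$ via $\gamma\mapsto\vec n_\gamma(x)$ converges weakly to the normalized round measure; this is the equidistribution of $\Gamma$-orbits on the ideal boundary seen from $x$, i.e.\ convergence to the Patterson--Sullivan conformal density, which on hyperbolic space coincides with the visual round measure. Since that measure has covariance tensor $\tfrac{1}{n}\mathrm{Id}$, the variance above tends to $\tfrac{1}{n}|v|_{g_0}^2$, giving $|d\tilde\phi_c|_x(v)|^2\to \tfrac{(n-1)^2}{4n}|v|_{g_0}^2$. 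To conclude $\Vol(M,\phi_c^*\mathbf{g}_{\mathrm{Hil}})\to \Vol(M,\tfrac{(n-1)^2}{4n}g_0)$ this convergence must be uniform in $x$ over a fundamental domain, which in the cocompact setting follows from exponential mixing of the geodesic flow (as in Subsection~\ref{appendix c}) together with uniform control of the Poincar\'e series asymptotics near the critical exponent. Combined with the lower bound, this yields the theorem.
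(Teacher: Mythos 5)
Your two-inequality blueprint is the right one (it is also the one the paper cites out to \cite{BCG95,BCG96} and \cite[Theorem~4.1]{Antoine23a}), and your lower-bound argument via $F=\mathrm{Bar}\circ\phi$, degree one, the area formula, and the Jacobian bound of Lemma~\ref{astuce} is exactly how the paper argues in Step~1 of the proof of Theorem~\ref{uniqueness1}. The upper bound, however, is heavily over-engineered. You have already computed
$$\bigl|d\tilde\phi_c|_x(v)\bigr|^2=\tfrac{c^2}{4}\,\mathrm{Var}_{\mu_{c,x}}\bigl[\langle v,\vec n_\gamma(x)\rangle\bigr],$$
and the ``main obstacle'' you then flag --- weak convergence of $\mu_{c,x}$ to the round visual measure, uniformly in $x$ --- is not needed at all. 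Simply trace over a $g_0$-orthonormal basis $\{e_j\}$: since $\sum_j\mathrm{Var}[\langle e_j,\vec n_\gamma\rangle]\le\sum_j\mathbb{E}[\langle e_j,\vec n_\gamma\rangle^2]=\mathbb{E}[|\vec n_\gamma|^2]=1$, you get $\sum_j|d\tilde\phi_c(e_j)|^2\le c^2/4$ pointwise, hence by AM--GM $|\Jac\tilde\phi_c|\le(c^2/4n)^{n/2}$, and therefore $\Vol(M,\phi_c^*\mathbf{g}_{\mathrm{Hil}})\le(c^2/4n)^{n/2}\Vol(M,g_0)\to\Vol(M,\tfrac{(n-1)^2}{4n}g_0)$ as $c\downarrow n-1$. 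No Patterson--Sullivan equidistribution, no uniformity estimates, no mixing. This crude trace bound is precisely what the paper's Lemma~\ref{psic} records for the closely related maps $\Psi_c$ (there $\Psi_c$ averages $e^{-c\,d(x,\cdot)}$ over a fundamental domain rather than summing over the orbit, which has the advantage of being manifestly $C^1$, whereas your $\tilde\phi_c$ is non-smooth on the orbit $\Gamma\cdot o$ and must be perturbed before it lands in $\mathscr{F}(M)$ --- you should say so, since $\mathscr{F}(M)$ requires smooth immersions). The equidistribution route you sketch would work, but it proves the much stronger statement that $\phi_c^*\mathbf{g}_{\mathrm{Hil}}\to\tfrac{(n-1)^2}{4n}g_0$ in $C^0$, which is irrelevant for the volume identity; for this theorem the trace inequality alone closes the argument.
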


A sequence $\phi_i\in \mathscr{H}_M$ is said to be \emph{minimizing} if  
$$\lim_{i \to \infty} \Vol(M,\phi_i^*(\mathbf{g}_{\mathrm{Hil}})) = \spherevol(M).$$ 
Denote by $\llbracket 1_M \rrbracket$ the integral current in $(M,g_0)$ induced by $M$ and its orientation. For a Lipschitz map $\phi:M\to S^\infty/\lambda_{\Gamma}(\Gamma)$, recall that $\phi_\sharp(\llbracket 1_M \rrbracket)$ denotes the push-forward integral current in  $S^\infty/\lambda_{\Gamma}(\Gamma)$.
We can now define spherical Plateau solutions.
\begin{definition}
We call spherical Plateau solution for $M$ any $n$-dimensional integral current space $C_\infty$ which is the limit in the intrinsic flat topology of a sequence $C_i := (\phi_i)_\sharp \llbracket 1_M\rrbracket$ where $\phi_i\in \mathscr{H}_M$ is a minimizing sequence.

\end{definition}
For any sequence $\phi_i\in \mathscr{H}_M$ such that
$$\lim_{i \to \infty} \Vol(M,\phi_i^*(\mathbf{g}_{\mathrm{Hil}})) = \spherevol(M),$$
the mass and diameter of $(\phi_i)_\sharp \llbracket 1_M\rrbracket$ are uniformly bounded, so by Wenger's compactness (Theorem \ref{key compact}) there is a subsequence of $(\phi_i)_\sharp \llbracket 1_M\rrbracket$ converging in the intrinsic flat topology. The need for an abstract compactness result like Theorem \ref{key compact} is explained in \cite[Remark 3.3]{Antoine23a}.

\begin{remarque}
While for our present purpose, it is enough to consider the set $\mathscr{H}_M $ of admissible smooth immersions from $M$ to $S^\infty/\lambda_{\Gamma}(\Gamma)$, we believe that it is more natural to formulate the general  spherical Plateau problem in terms of 
 integral currents with compact support in $S^\infty/\lambda_{\Gamma}(\Gamma)$ representing a homology class   $h \in H_n(\Gamma;\mathbb{Z})$.  This is the point of view presented in  \cite[Section 3]{Antoine23a}.
In fact, by \cite{Brunnbauer08} and a standard polyhedral approximation result for integral currents in Hilbert manifolds \cite[Lemma 1.6]{Antoine23a}, it is possible to prove that these two setups lead to the same notions of spherical volume and spherical Plateau solutions, at least when the countable group $\Gamma$ is torsion-free.
\end{remarque}

\subsection{The barycenter map and the Jacobian bound} \label{appendix b}

The barycenter map played a crucial role in the work of Besson-Courtois-Gallot on the volume entropy inequality \cite{BCG95,BCG96} (see also \cite{BCG99, Sambusetti99, CF03, Souto08} for a small sample of other uses of the barycenter map).

For the reader's convenience, all the main properties of the barycenter map are proved in our setting in \cite[Section 2]{Antoine23a} and the main Jacobian bound is recalled below. We choose to express the barycenter map using the $\ell^2$-space on a group, instead of the $L^2$-space on a boundary as in \cite{BCG95}. The advantage is that only a minimal amount of knowledge is needed, and that it extends directly to other more general situations (3-manifolds, connected sums, Plateau Dehn fillings, see \cite[Sections 4, 5, 6]{Antoine23a}). 

Let $(M,g_0)$ be a closed oriented hyperbolic manifold. Let $(\tilde{M},g_0)$ be its universal cover, namely the hyperbolic $n$-space. 
Let $\Gamma:=\pi_1(M)$. The latter acts properly cocompactly and freely on $(\tilde{M}, g_0)$.
Let $S^\infty$ be the unit sphere in the Hilbert space $\ell^2(\Gamma)$, on which $\Gamma$ acts freely and properly by isometries via the regular representation, so that $S^\infty/\lambda_{\Gamma}(\Gamma)$ is a smooth Hilbert manifold endowed with the standard round metric (see Subsection \ref{subsection:platee}).

Set $$\varkappa(t):= \frac{1}{c}\log(\cosh(ct))$$ 
where $c$ is a positive constant. When we fix $c$ large enough, the following holds: for any $w\in \tilde{M}$, the composition
$$\rho_{w}(.):= \varkappa(\dist_{g_0}(w,.))$$
is smooth everywhere and satisfies 
\begin{equation} \label{hessian lower bd''}
Dd\rho_{w} \geq \Id - d\rho_{w}\otimes d\rho_{w}.
\end{equation}

\begin{definition} \label{s+}
Fix a basepoint $o\in \tilde{M}$. Let $\mathbb{S}^+$ be the set of functions in $S^\infty$ with finite support.
For $f\in \mathbb{S}^+$, consider the functional
\begin{equation} \label{bf}
\begin{split}
\mathcal{B}_f & : \tilde{M}\to [0,\infty]\\
\mathcal{B}_f(x) & := \sum_{\gamma\in \Gamma} |f(\gamma)|^2 \rho_{\gamma.o}(x).
\end{split}
\end{equation}
The \emph{barycenter map} is then defined as
$$\mathrm{Bar} : \mathbb{S}^+\to \tilde{M}$$
$$\mathrm{Bar}(f) := \text{ the unique point minimizing $\mathcal{B}_f$}.$$
\end{definition} 
The barycenter map is well-defined: the modified distance functions $\rho_{\gamma.o}$ are strictly convex,  
moreover $\mathcal{B}_f$ tends to infinity uniformly as $x\to \infty$, so that the point where $\mathcal{B}_f$ attains its minimum exists and is unique. 
The subset $\mathbb{S}^+\subset S^\infty$ is invariant by $\Gamma$, and $\mathrm{Bar}$ is $\Gamma$-equivariant. 
The quotient map 
$\mathbb{S}^+/\Gamma \to M$ is also denoted by $\mathrm{Bar}$.  
For more details, see \cite[Section 2]{Antoine23a}.

We will avoid discussing regularity issues for the barycenter map $\mathrm{Bar}: \mathbb{S}^+/\Gamma \to M$ by only considering its restriction to the supports of ``polyhedral chains'', which will be enough in all our applications. A $k$-dimensional polyhedral chain $P$ in $S^\infty/\lambda_{\Gamma}(\Gamma)$ is by definition a $k$-dimensional integral current $P$ such that there are smoothly embedded totally geodesic $k$-simplices $S_1,...,S_m\subset  S^\infty/\lambda_{\Gamma}(\Gamma)$ endowed with an orientation, and integers $a_j$ so that
$$P= \sum_{j=1}^m  a_j \llbracket 1_{S_j}\rrbracket$$
(see \cite[Subsection 1.7]{Antoine23a}).
Given a polyhedral chain $P$ in $\mathbb{S}^+/\Gamma$, we can check that the restriction
$$\mathrm{Bar}: \spt(P) \to M$$
is indeed continuous and smooth on each simplex. For $1\leq k\leq n$, given a smooth embedding with totally geodesic image $\varphi: \mathbb{R}^k \to \mathbb{S}^+ \subset S^\infty$, 
let $Q$ be the tangent $k$-plane at $p:=\varphi(y)$ for some $y\in \mathbb{R}^k$.
The map 
$$\mathrm{Bar}: \varphi(\mathbb{R}^k)\to \tilde{M}$$ is smooth around $p$, and its differential along $Q$  is denoted by $d\mathrm{Bar}\big|_Q: Q\to T_{\mathrm{Bar}(p)} \tilde{M}$. 
For more details on those claims, see  \cite[Susbection 2.2]{Antoine23a}.

The main result in this Subsection is the following (see \cite[Lemma 2.4]{Antoine23a} for a proof):

 \begin{lemme} \cite{BCG95} \label{astuce}
Suppose that $n\geq 3$. Let $f\in \mathbb{S}^+$
 and let $Q$ be the tangent $n$-plane at $f$ of a totally geodesic $n$-simplex in $ \mathbb{S}^+$  passing through $f$. Then 
 \begin{equation} \label{jacbar<}
 |\Jac \mathrm{Bar}\big|_Q| \leq \big(\frac{{4n}}{(n-1)^2}\big)^{n/2}.
 \end{equation}
 
Moreover for any $\eta>0$ small enough,
there exists $c_{\eta}>0$ with $\lim_{\eta \to 0} c_{\eta} = 0$, such that the following holds. 
If 
 $$  |\Jac \mathrm{Bar}\big|_Q| \geq \big(\frac{{4n}}{(n-1)^2}\big)^{n/2} -\eta,$$
then for any norm $1$ tangent vector $\Vec{u}\in Q$,
 \begin{equation} \label{dbar>}
 |d \mathrm{Bar}\big|_Q(\Vec{u})| \geq  \big(\frac{4n}{(n-1)^2}\big)^{1/2} - c_{\eta}
 \end{equation}
 and
 for any
connected continuous piecewise geodesic curve $\alpha \subset \mathbb{S}^+$ of length less than $\eta$ starting at $f$, 
 we have 
  \begin{equation} \label{length<}
 \length_{g_0}(\mathrm{Bar}(\alpha)) \leq   (\big(\frac{4n}{(n-1)^2}\big)^{1/2}+  c_{\eta}) \length(\alpha)
  \end{equation}
  where $\length(\alpha)$ is computed using the standard round metric on $S^\infty$.
 \end{lemme}

\subsection{Intrinsic uniqueness for hyperbolic manifolds}

From a geometric point of view, a natural question is the uniqueness of spherical Plateau solutions for closed  hyperbolic manifolds.
We do not know if uniqueness holds, however we will prove uniqueness up to ``intrinsic isomorphism''.


Consider an integral current space $C=(X,d,T)$ and an oriented, connected, closed Riemannian manifold $(N,g_N)$, which induces the integral current space $(N,\dist_{g_N},\llbracket 1_N\rrbracket)$.
The intrinsic metric 
on $X$ induced by $d$ is denoted by $L_{d}$. 
Note that the identity map  
$$\id: (X,L_d)\to (X,d)$$ is always $1$-Lipschitz (on each path connected component). 
\begin{definition} \label{definition:intrinsic isomorphism}
We say that $C=(X,d,T)$ is \emph{intrinsically isomorphic} to $(N,g_N)$ if there is an isometry 
$$\varphi: (N,\dist_{g_N}) \to (X,L_d)$$ 
such that 
$$(\id\circ \varphi)_\sharp \llbracket 1_N\rrbracket = T.$$
\end{definition}
For clarity, we emphasize that ``being intrinsically isomorphic'' is weaker than ``being at intrinsic flat distance $0$ from each other''.

 Our main result in this section shows that in dimensions at least $3$, the spherical Plateau solutions for closed hyperbolic manifolds are unique up to intrinsic isomorphism,
 see Definition \ref{definition:intrinsic isomorphism}.

\begin{theo} \label{uniqueness1}
Let $(M,g_0)$ be a closed oriented hyperbolic manifold of dimension at least $3$. Then any spherical Plateau solution for $M$ is intrinsically isomorphic to $(M,\frac{(n-1)^2}{{4n}}g_0)$. 
\end{theo}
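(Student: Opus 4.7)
The plan is to construct a limit barycenter map $\mathrm{Bar}_\infty : X_\infty \to M$ extracted from the restricted barycenter maps $\mathrm{Bar} : \spt(C_i) \to M$ along a minimizing sequence, and to show that $\mathrm{Bar}_\infty$ is an isometry for intrinsic length metrics. Take a minimizing sequence $\phi_i \in \mathscr{F}(M)$; after a slight perturbation I may arrange that $\phi_i(M) \subset \mathbb{S}^+/\Gamma$ away from a controlled $\|C_i\|$-negligible set, so that $\mathrm{Bar}$ is defined along $\phi_i(M)$. Let $C_i := (\phi_i)_\sharp \llbracket 1_M \rrbracket$ and pass to an intrinsic flat limit $C_\infty = (X_\infty, d_\infty, S_\infty)$ via Theorem \ref{key compact}. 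Since $\mathrm{Bar} \circ \phi_i : M \to M$ has degree one, the area formula combined with the pointwise Jacobian bound (\ref{jacbar<}) of Lemma \ref{astuce} yields
\[
\Vol(M, g_0) \;\leq\; \int_{\spt(C_i)} |\Jac \mathrm{Bar}|\, d\|C_i\| \;\leq\; \Bigl(\tfrac{4n}{(n-1)^2}\Bigr)^{n/2} \mathbf{M}(C_i).
\]
As $\mathbf{M}(C_i) \to \spherevol(M) = \Vol(M, \tfrac{(n-1)^2}{4n} g_0)$ (Theorem \ref{theorem:bcg}), the chain saturates in the limit and the Jacobian bound is $\eta_i$-saturated on a subset $\Omega_i \subset \spt(C_i)$ whose complement has $\|C_i\|$-mass $o(1)$.

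Next, apply Lemma \ref{in a common Banach} to isometrically embed $j_i : \mathcal{S}(\Gamma)/\Gamma \hookrightarrow \mathbf{Z}$, $(X_\infty, d_\infty) \hookrightarrow \mathbf{Z}$ and $(M, \tfrac{(n-1)^2}{4n} g_0) \hookrightarrow \mathbf{Z}$ in a common Banach space, so that $(j_i)_\sharp C_i \to S_\infty$ in the flat topology of $\mathbf{Z}$ with Hausdorff control on a suitable exhaustion of $\Omega_i$. The length-comparison (\ref{length<}) of Lemma \ref{astuce}, applied along short curves through $\Omega_i$, gives a uniform local Lipschitz bound for $\mathrm{Bar}$ near $\Omega_i$ (with constant $(1+o(1))$ after rescaling $M$ by $\tfrac{(n-1)^2}{4n}$). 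A standard Arzel\`{a}--Ascoli argument in $\mathbf{Z}$ then produces a Lipschitz map $\mathrm{Bar}_\infty : X_\infty \to M$ satisfying $(\mathrm{Bar}_\infty)_\sharp S_\infty = \llbracket 1_M \rrbracket$ and the one-sided length bound $\dist_{\frac{(n-1)^2}{4n} g_0}(\mathrm{Bar}_\infty(x), \mathrm{Bar}_\infty(y)) \leq L_{d_\infty}(x, y)$ for every $x, y \in X_\infty$.

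The main obstacle is the reverse inequality. To establish it I would use the curve-lifting strategy illustrated in Subsection \ref{exemple coaire}: given a length-minimizing geodesic $\sigma$ in $(M, \tfrac{(n-1)^2}{4n} g_0)$ joining generic points $A, B$, choose a thin tube around $\sigma$ foliated by codimension-one transversals, pull this foliation back via $\mathrm{Bar}$ to $\spt(C_i) \cap \Omega_i$, and apply Fubini together with the coarea formula to select a slicing parameter whose pre-image $\varkappa_i \subset \spt(C_i)$ is a Lipschitz curve avoiding boundaries and joining a point near $\phi_i^{-1}(A)$ to one near $\phi_i^{-1}(B)$. The near-saturation of the Jacobian on $\Omega_i$ combined with the sharp differential estimate (\ref{dbar>}) forces $\length(\varkappa_i) \leq (1+o(1))\length(\mathrm{Bar}(\varkappa_i)) \leq (1+o(1))\length(\sigma)$. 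Transferring $\varkappa_i$ into $\mathbf{Z}$ via $j_i$ and passing to the Hausdorff limit produces a curve in $X_\infty$ of length at most $\length(\sigma)$ joining the preimages of $A$ and $B$ under $\mathrm{Bar}_\infty$, yielding the desired reverse bound $L_{d_\infty}(x, y) \leq \dist_{\frac{(n-1)^2}{4n} g_0}(\mathrm{Bar}_\infty(x), \mathrm{Bar}_\infty(y))$.

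Combining both inequalities, $\mathrm{Bar}_\infty$ is an isometry $(X_\infty, L_{d_\infty}) \to (M, \dist_{\frac{(n-1)^2}{4n} g_0})$, and together with $(\mathrm{Bar}_\infty)_\sharp S_\infty = \llbracket 1_M \rrbracket$ this is the intrinsic isomorphism of Definition \ref{definition:intrinsic isomorphism}. The principal technical challenges lie in (i) the coarea/Fubini construction of $\varkappa_i$ in the Banach ambient, where the slicing parameter must be chosen so that $\varkappa_i$ remains inside $\Omega_i$ almost entirely and converges, rather than collapsing, in $\mathbf{Z}$, and (ii) propagating admissibility in the sense of Definition \ref{s+} to the limit so that $\mathrm{Bar}_\infty$ is well defined and satisfies the differential comparisons on a set of full $\|S_\infty\|$-measure.
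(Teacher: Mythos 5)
Your outline captures the right high-level strategy — construct a limit barycenter map, prove near-saturation of the Jacobian bound on most of $\spt(C_i)$, derive the one-sided Lipschitz estimate, and close the argument with a coarea-based curve-lifting property to get the reverse length bound. This is essentially the paper's plan. However, there is one genuine gap in how you set up the limit map.

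The Lipschitz bound (\ref{length<}) for $\mathrm{Bar}$ is \emph{not} uniform over $\mathbb{S}^+/\Gamma$: the constant $c_{\eta,d}$ in Lemma \ref{astuce} depends on $d = \dist_{g_0}(x, \{\gamma.o\})$ and degenerates as $d \to 0$, i.e.\ as $\mathrm{Bar}(f)$ approaches the orbit of the reference point $o$. Near the projected reference point $\mathbf{p} \in M$ you therefore have no uniform Lipschitz control of $\mathrm{Bar}$, and your Arzel\`{a}--Ascoli step ``producing a Lipschitz map $\mathrm{Bar}_\infty : X_\infty \to M$ with constant $(1+o(1))$'' does not go through globally. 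This is precisely why the paper introduces the nested punctured balls $N_{d_k}$, defines restricted currents $D_{k,i} := C_i \llcorner (\Omega_{i,\tilde r}\setminus \mathrm{Bar}^{-1}(N_{d_k}))$, builds $\lambda_k$-Lipschitz limit maps $\mathrm{Bar}_{k,\infty}: \spt(D_k) \to M\setminus N_{d_k}$ on each annular region separately, proves the intrinsic-isometry property there, and only in a final gluing step (via Lemma \ref{in a common Banach} and a diagonal argument) assembles $\mathrm{Bar}_\infty$ on $X_\infty\setminus\{\text{point}\}$ and extends across the puncture by mass-counting. Without this decomposition, your construction of $\mathrm{Bar}_\infty$ as a single Lipschitz map on all of $X_\infty$ cannot be justified, and the current-pushforward identity $(\mathrm{Bar}_\infty)_\sharp S_\infty = \llbracket 1_M\rrbracket$ you use downstream would also need the same exhaustion to be established.

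A smaller point: your concern (ii) about ``propagating admissibility to the limit'' is not where the difficulty lies — the limit map is constructed extrinsically by Lipschitz extension into $L^\infty(M\setminus N_{d_k})$ (a Kuratowski target), so one never needs $\mathrm{Bar}$ to be evaluated on points of $X_\infty$ itself. The real obstruction to a global construction is the non-uniform Lipschitz behavior near $\mathbf{p}$ described above. Also, the perturbation should place the entire image $\phi_i(M)$ inside $\mathbb{S}^+/\Gamma$ (finite supports, general position), not just away from a negligible set; otherwise the area formula and slicing arguments need additional care.
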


\begin{proof}

Let $\phi_i \in \mathscr{H}_M$ be a minimizing sequence, namely
\begin{equation}\label{itsminimizing}
\lim_{i \to \infty} \Vol(M,\phi_i^*(\mathbf{g}_{\mathrm{Hil}})) = \spherevol(M) = \Vol(M,\frac{(n-1)^2}{4n}g_0),
\end{equation}
where the second equality follows from Theorem \ref{theorem:bcg}.
We suppose that the integral currents 
$$C_i :=(\phi_i)_\sharp \llbracket 1_M\rrbracket$$ converge in the intrinsic flat topology to a spherical Plateau solution $$C_\infty=(X_\infty,d_\infty,S_\infty).$$ 
Set $\Gamma:=\pi_1(M)$. By a perturbation argument, we can assume without loss of generality that for all $i$, for all $y\in M$, any lift of $\phi_i(y)\in S^\infty/\lambda_{\Gamma}(\Gamma)$ in $ S^\infty\subset \ell^2(\Gamma)$ has finite support. In particular, we can assume that 
$$\spt(C_i) \subset \mathbb{S}^+/\Gamma$$ where $\mathbb{S}^+$ is defined in Subsection \ref{appendix b}. By a further perturbation of $\phi_i$, we can even assume that $C_i$ is a polyhedral chain (a notion defined in Subsection \ref{appendix b}), in particular that $\spt(C_i)$ is a finite union of embedded totally geodesic $n$-simplices in $S^\infty/\lambda_{\Gamma}(\Gamma)$, see \cite[Lemma 1.6]{Antoine23a}.
 
From now on, we will use the notation
 $$g':= \frac{(n-1)^2}{4n} g_0.$$
In the sequel, Jacobians, lengths and distances will be computed with respect to the metric $g'$ on $M$. 
Fix $o\in \tilde{M}$ 
and let $$\mathrm{Bar}:\mathbb{S}^+/\Gamma\to M$$ be the barycenter map, see Section \ref{appendix b}. 
By $\Gamma$-equivariance, for any $i$, $\mathrm{Bar}: \spt(C_i) \to M$ is a Lipschitz homotopy equivalence, and
\begin{equation} \label{1_M}
{\mathrm{Bar}}_\sharp(C_i) = \llbracket 1_M \rrbracket.
\end{equation}


By lower semicontinuity of the mass under intrinsic flat convergence \cite{SW11}:
\begin{equation} \label{masssinfinity}
\mathbf{M}(C_\infty) \leq \spherevol(M) =\Vol(M,g')
\end{equation}
(the equality above is Theorem \ref{theorem:bcg}).

The $n$-dimensional Jacobian of ${\mathrm{Bar}}$ along the  tangent $n$-plane of $\spt(C_i)$ at any point $q$ in the interior of a ``face'' of $\spt(C_i)$ is well-defined and is bounded from above by $1$ with respect to the metric $g'$ on $M$, by the main Jacobian bound (\ref{jacbar<}) in Lemma \ref{astuce}. 
This implies by the area formula 
and (\ref{1_M}) that
$$\mathbf{M}(C_i) \geq \Vol(M,g') = \spherevol(M).$$

Since $C_i$ has mass converging to $\spherevol(M)$, by the area formula, the Jacobian of ${\mathrm{Bar}}$ has to be close to $1$ on a larger and larger part of $\spt(C_i)$ as $i\to \infty$, meaning that there are open subsets $\Omega_i \text{ in the smooth part of } \spt(C_i)$
such that at every point $q\in \Omega_i$, 
 there is a well-defined tangent $n$-plane of $\spt C_i$ at $q$, 
and
\begin{equation} \label{omegaici}
\begin{split}
\lim_{i\to \infty} \mathbf{M}(C_i\llcorner \Omega_i)  = \lim_{i\to \infty} \mathbf{M}(C_i) & = \spherevol(M), \\
\lim_{i\to\infty} \| \Jac {{\mathrm{Bar}}} - 1\|_{L^\infty(\Omega_i)} & =0,\\
\end{split}
\end{equation}
where we recall that the Jacobian is computed with $g'$ and $\Jac {{\mathrm{Bar}}}$ denotes the Jacobian along the tangent $n$-plane, see Section \ref{appendix a}. For $r>0$, set 
$$\Omega_{i,r} := \text{the $r$-neighborhood of $\Omega_i$ in $\mathbb{S}^+/\Gamma\subset S^\infty/\lambda_{\Gamma}(\Gamma)$}.$$

By  (\ref{omegaici}), the coarea formula and Sard's theorem, 
after smoothing out the distance function from $\Omega_i$ by a standard argument and still using the notation ``$\Omega_{i,r}$'' for the $r$-sublevel set of the smoothed out distance function, there are $r^{(i)}\in (0,1)$ such that for each $i$, 
$$D_{i}:= C_i\llcorner \Omega_{i,r^{(i)}}$$
is an integral current, and $\spt(D_{i})$ is a compact piecewise smooth submanifold of $S^\infty/\lambda_{\Gamma}(\Gamma)$ satisfying the following:  
\begin{itemize}
\item
the boundary of $\spt(D_{i})$ is piecewise smooth (this is where considering the smoothed out distance function is used) and we have 
\begin{equation} \label{partial dki}
\lim_{i\to \infty} \mathbf{M}(\partial D_{i}) =0.
\end{equation}
\item after taking a subsequence, $D_{i}$ still converges to 
$$C_\infty=(X_\infty,d_\infty,S_\infty)$$ in the intrinsic flat topology as $i\to \infty$, In particular, there are a Banach space $\mathbf{Z}'$ and isometric embeddings 
$$\spt(D_{i}) \hookrightarrow \mathbf{Z}', \quad \spt S_\infty \hookrightarrow \mathbf{Z}'$$
(with a slight abuse of notations we consider those sets as subsets of $\mathbf{Z}'$), 
such that $D_i$ converges to $S_\infty$ in the flat topology inside $\mathbf{Z}'$.
\end{itemize}


By (\ref{omegaici}) and (\ref{1_M}), 
\begin{equation}\label{1300}
{\mathrm{Bar}}_\sharp(D_{i}) \quad \text{converges in the flat topology to $\llbracket 1_{M} \rrbracket.$ inside $(M,g')$} 
\end{equation}

Inequality (\ref{length<}) of Lemma \ref{astuce} ensures that a Lipschitz bound holds uniformly in a neighborhood of $\Omega_i$: for  any $\epsilon>0$, there is $r_\epsilon>0$, such that if $i$ is large enough, then for $f\in \Omega_i$ and $f' \in \mathbb{S}^+/\Gamma$ joined to $f$ by a piecewise geodesic curve $\alpha \subset  \mathbb{S}^+/\Gamma$ of length at most $r_\epsilon>0$, we have
\begin{equation}\label{dbar2}
 \length_{g'}({\mathrm{Bar}}(\alpha))   \leq  (1+\epsilon) \length(\alpha).
\end{equation}
Given $f,f'\in \mathbb{S}^+/\Gamma$ and a curve in $S^\infty/\lambda_{\Gamma}(\Gamma)$ joining those two elements, after a small perturbation, that curve can be assumed to be inside $\mathbb{S}^+/\Gamma$. 
As a consequence of (\ref{dbar2}), we get the following local Lipschitz bound: for any $\tilde{r}\in (0,1)$, the restriction of ${\mathrm{Bar}}$ to the subset $\Omega_{i,\tilde{r}}$ is  $\lambda$-Lipschitz for some $\lambda>0$ independent of $i$. 
In particular, the restriction 
\begin{equation} \label{dki}
{\mathrm{Bar}}: \spt(D_{i}) \to M \quad \text{is $\lambda$-Lipschitz.}
\end{equation}

We can now check that Assumption \ref{assump} is verified with $(N,h)=(M,g')$, $(E,d) = \mathbf{Z}'$, $S_i = D_i$, $N_i=\spt D_i$, $\varphi_i = \mathrm{Bar}$, $R_i = \Omega_i$. In particular, in order to check Assumption \ref{assump} (e), observe that since the Jacobian of $\mathrm{Bar}$ converges to $1$ on $\Omega_i$ by  (\ref{omegaici}),
$\mathrm{Bar}$ is forced to be almost a Riemannian isometry on $\Omega_i$ by (\ref{dbar>}), (\ref{length<}) in Lemma \ref{astuce}. Furthermore, the additional assumption in Proposition \ref{proposition:limit map} (2) is also satisfied by (\ref{dbar2}).

By Proposition \ref{proposition:limit map} (2), we immediately conclude that there is a limit map
${\mathrm{Bar}}_{\infty} : \spt S_\infty\to (M,g')$
which is an isometry for the intrinsic metrics induced on $\spt S_\infty$ and $M$. Moreover by Lemma \ref{lemma:limit map} (2), ${\mathrm{Bar}}_{\infty}$ preserves the current structures in the sense that 
$$
({\mathrm{Bar}}_{\infty})_\sharp(S_\infty) = \llbracket 1_{M} \rrbracket.
$$
In other words, $C_\infty= (X_\infty,d_\infty,S_\infty)$ is intrinsically isomorphic to $(M,g')$, as wanted.

\end{proof}

\section{The entropy stability theorem} \label{section:stability}

\subsection{Technical preparation}

As before, $M$ is the closed, connected, oriented hyperbolic manifold, $\Gamma$ is its fundamental group and $S^\infty$ is the unit sphere inside $\ell^2(\Gamma)$, which is acted upon by $\Gamma$ via the regular representation.

Let us define maps $\mathcal{P}_c$ relating the volume entropy of a Riemannian metric on $M$ and the spherical volume of $M$,  introduced by Besson-Courtois-Gallot, see \cite[Proof of Lemma 3.1]{BCG91}. 
Let $g$ be a Riemannian metric on $M$. The universal cover of $M$ is $\tilde{M}$ and its fundamental group is $\Gamma$. Let $h(g)$ be its volume entropy. Denote by $D_M$ a Borel fundamental domain in $\tilde{M}$ for the action of $\Gamma$ and let $\gamma.D_M$ be its image by an element $\gamma\in \Gamma$.  Besson-Courtois-Gallot considered  for $c>h(g)$ maps similar to the following:
\begin{align*}
\mathcal{P}_c &:  \tilde{M} \to  S^\infty \\
 & x\mapsto \{\gamma \mapsto \frac{1}{\|e^{-\frac{c}{2}\dist_{g}(x,.)}\|_{L^2(\tilde{M},g)}}   \big[\int_{\gamma. D_M}  e^{-c\dist_{g}(x,u)} \dvol_g(u)\big]^{1/2}\}.
\end{align*}
Those maps satisfy the following properties, which hold in any dimension $n\geq 2$: 
\begin{lemme} [\cite{BCG91}] \label{psic}
For a Riemannian metric $g$ on $M$, $\mathcal{P}_c$ is a $\Gamma$-equivariant Lipschitz map, and for almost any $x\in \tilde{M}$, it satisfies
 \begin{equation} \label{bhy}
 \sum_{j=1}^n |d_x{\mathcal{P}}_c(e_j)|^2 \leq \frac{c^2}{4},
 \end{equation}
 where $\{e_j\}$ is a $g$-orthonormal basis of $T_x\tilde{M}$.
\end{lemme}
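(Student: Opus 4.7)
The plan is to factor $\Psi_c$ through a map into $L^2(\tilde{M},g)$, which will reduce the Jacobian estimate to an infinitesimal computation involving the gradient of the distance function. Specifically, for $c>h(g)$ the function $e^{-\frac{c}{2}\dist_g(x,\cdot)}$ lies in $L^2(\tilde{M},g)$ with nonzero norm, so we can introduce the auxiliary map
\[
\tilde{\Psi}_{c/2} : \tilde{M} \to \mathcal{S}(L^2(\tilde{M},g)), \qquad \tilde{\Psi}_{c/2}(x)(u):=\frac{e^{-\frac{c}{2}\dist_g(x,u)}}{\|e^{-\frac{c}{2}\dist_g(x,\cdot)}\|_{L^2}},
\]
and the partition-into-fundamental-domains map $P:L^2(\tilde{M},g)\to \ell^2(\Gamma)$ defined by $P(f)(\gamma):=\|f\|_{L^2(\gamma.\mathcal{D},g)}$. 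A direct check using $\sum_{\gamma}\int_{\gamma.\mathcal{D}}|f|^2 = \int_{\tilde M}|f|^2$ shows that $P$ sends the unit sphere of $L^2(\tilde M,g)$ into $\mathcal{S}(\Gamma)$, and one verifies from the definitions that $\Psi_c = P\circ \tilde{\Psi}_{c/2}$.

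To obtain $\Gamma$-equivariance, I would substitute $u=\gamma_0.v$ in the integral defining $\Psi_c(\gamma_0.x)(\gamma)$ and use the fact that $\gamma_0$ acts by $g$-isometries on $\tilde M$, which identifies $\int_{\gamma.\mathcal{D}} e^{-c\dist_g(\gamma_0.x,u)}\,dvol_g(u)$ with $\int_{\gamma_0^{-1}\gamma.\mathcal{D}} e^{-c\dist_g(x,v)}\,dvol_g(v)$ and leaves the normalization $N(x)$ invariant. This gives $\Psi_c(\gamma_0.x)(\gamma)=\Psi_c(x)(\gamma_0^{-1}\gamma)$, matching the left regular representation.

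For the differential bound, I would first show that $dP$ is a contraction: differentiating $P(f)(\gamma)^2 = \int_{\gamma.\mathcal{D}} f^2$ and applying Cauchy--Schwarz on each fundamental domain yields $|dP(f)[\delta f](\gamma)|^2 \leq \|\delta f\|_{L^2(\gamma.\mathcal{D})}^2$, which summed over $\gamma$ gives $\|dP(f)[\delta f]\|_{\ell^2(\Gamma)}\leq \|\delta f\|_{L^2(\tilde M)}$. Next I would compute $d\tilde{\Psi}_{c/2}$ pointwise almost everywhere: using that $\dist_g(\cdot,u)$ is differentiable with unit gradient $\theta(\cdot,u)$ outside the cut locus of $u$, one finds
\[
d_x\tilde{\Psi}_{c/2}(v)(u) = -\tfrac{c}{2}\,\tilde{\Psi}_{c/2}(x)(u)\,\langle v,\theta(x,u)\rangle \;+\; \text{(projection correction along $\tilde{\Psi}_{c/2}(x)$)}.
\]
Since $\tilde{\Psi}_{c/2}(x)$ is a unit vector, the correction only decreases the norm, so for a $g$-orthonormal basis $\{e_j\}$ of $T_x\tilde M$,
\[
\sum_{j=1}^n |d_x\tilde{\Psi}_{c/2}(e_j)|_{L^2}^2 \leq \frac{c^2}{4}\int_{\tilde M}|\tilde{\Psi}_{c/2}(x)(u)|^2 \sum_{j=1}^n\langle e_j,\theta(x,u)\rangle^2\,dvol_g(u)=\frac{c^2}{4},
\]
using $|\theta|=1$ a.e.\ and $\|\tilde{\Psi}_{c/2}(x)\|_{L^2}=1$. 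Composing with the contraction $dP$ gives the desired inequality~(\ref{bhy}).

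The Lipschitz property then follows: the pointwise a.e.\ bound $\sum_j|d_x\Psi_c(e_j)|^2\leq c^2/4$ combined with the fact that $x\mapsto \Psi_c(x)$ is manifestly continuous (the distance function is continuous and dominated convergence applies since $c>h(g)$ controls the tail of the volume integral) upgrades to a global Lipschitz estimate with constant $c/2$ along any rectifiable curve in $\tilde M$. The main subtlety I anticipate is justifying the pointwise differentiation of the distance function in the integrand: this is resolved by invoking that for each fixed $x$, the cut locus has measure zero in $\tilde M$, hence the integrals and formal derivatives are well-defined a.e., and the dominated convergence theorem (using exponential decay from $c>h(g)$) legitimizes passing the derivative under the integral sign.
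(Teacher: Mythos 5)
Your proof is correct and follows essentially the same route as the paper's: both factor $\Psi_c$ through the unit sphere of $L^2(\tilde M,g)$ (your $\tilde\Psi_{c/2}$ and $P$ are the paper's $\overline\Psi_c$ and $\mathcal{A}$), prove the ``decomposition'' map is $1$-Lipschitz via Cauchy--Schwarz on each fundamental domain, and reduce the trace bound to $|\nabla_x \dist_g(\cdot,u)|=1$ a.e.\ together with the unit-norm normalization. The paper uses ``Pythagorean theorem'' as shorthand for your observation that the tangential projection onto the sphere can only decrease the norm; your filling in of the equivariance computation, the contraction estimate for $dP$, and the measure-zero cut locus justification are correct elaborations of steps the paper leaves to the reader.
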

\begin{proof}
For the reader's convenience, let us outline the proof. Consider $S_2(\tilde{M},g)$ the unit sphere in $L^2(\tilde{M},g)$. Set for $c>h(g)$:
$$\overline{\mathcal{P}}_c : \tilde{M} \to S_2(\tilde{M},g)$$
$$\overline{\mathcal{P}}_c : x\mapsto \{y\mapsto \frac{1}{\|e^{-\frac{c}{2}\dist_{g}(x,.)}\|_{L^2(\tilde{M},g)}}  e^{-\frac{c}{2}\dist_{g}(x,y)} \},$$
and set
$$\mathcal{I} : S_2(\tilde{M},g) \to S^\infty$$
 $$\mathcal{I} : f \mapsto \{\gamma \mapsto \big[\int_{\gamma.D_M} f^2(u) \dvol_g(u)  \big]^{1/2} \}.$$
 These maps are manifestly $\Gamma$-equivariant, and note that $\mathcal{P}_c = \mathcal{I} \circ \overline{\mathcal{P}}_c$.
 One easily checks that $\mathcal{I}$ is $1$-Lipschitz. To prove the lemma, it remains to study $\overline{\mathcal{P}}_c$. By the Pythagorean theorem, 
 \begin{align*}
 \|d_x\overline{\mathcal{P}}_c \|^2_{L^2(\tilde{M},g)} & \leq \frac{1}{\|e^{-\frac{c}{2}\dist_{g}(x,.)}\|^2_{L^2(\tilde{M},g)}} \int_{\tilde{M}} \|d_x e^{-\frac{c}{2}\dist_{g}(x,y)}\|^2 \dvol_g(y)\\
 & \leq  \frac{c^2/4}{\|e^{-\frac{c}{2}\dist_{g}(x,.)}\|^2_{L^2(\tilde{M},g)}} \int_{\tilde{M}} \| d_x \dist_{g}(.,y)\|^2 e^{-c\dist_{g}(x,y)}\dvol_g(y).
 \end{align*}
 Taking the trace and using that the norm of the gradient of the distance function is well-defined almost everywhere and equal to $1$, we get at almost every $x\in M$, in a  $g$-orthonormal basis $\{e_j\}$ of $T_x {M}$:
 $$\sum_{j=1}^n \|d_x\overline{\mathcal{P}}_c(e_j)\|^2_{L^2(\tilde{M},g)} \leq \frac{c^2}{4}.$$
 This proves the lemma.
 
\end{proof}

 If $g$ is a Riemannian metric on $M$, let $\dist_{g}$ be the geodesic distance on $M$ induced by $g$.
 The  definition of the standard notions of $\epsilon$-isometry, $\epsilon$-net can be found in \cite[Definition 7.3.27, Definition 1.6.1]{BBI22}. 
 Given  $\Omega$ subset of a Riemannian manifold $(M,g)$, $g\vert_\Omega$ denotes (by a slight abuse of notation) the intrinsic metric induced by the Riemannian metric $g$ using paths inside $\Omega$. In general $(\Omega,g\vert_\Omega)$ is very different from $(\Omega,\dist_{g}\vert_\Omega)$, where $\dist_{g}\vert_\Omega$ is the restriction of the  induced metric $\dist_g$ of $(M,g)$ to $\Omega$.

The set of admissible maps $\mathscr{H}_M$ was defined in Subsection \ref{subsection:platee}. The barycenter map $\mathrm{Bar} : \mathbb{S}^+/\Gamma\to M$ was defined in Subsection \ref{appendix b}. The following result is an intermediate step towards Theorem \ref{theorem:stable}, and its proof is parallel to that of Theorem \ref{uniqueness1} but more technical. 
 \begin{theo} \label{technical uniqueness1}
Let $(M,g_0)$ be a closed oriented hyperbolic manifold of dimension $n\geq 3$.  Let $g_i$ $(i\geq 1)$ be a sequence of Riemannian metrics on $M$ of same volume as $g_0$, and suppose that 
$$\lim_{i\to \infty} h(g_i) = h(g_0)=n-1.$$
Then, there are smooth open subsets $A_i\subset M$ such that the following holds after taking a subsequence:
\begin{enumerate}
\item 
$\lim_{i\to \infty}\Vol(A_i,g_i) = \Vol(M,g_0) $ and $\lim_{i\to \infty}\Area(\partial A_i,g_i) = 0$,
\item
$(A_i,g_i\vert_{A_i})$ converges in the intrinsic flat topology to an integral current space $$C_\infty=(X_\infty,d_\infty,S_\infty),$$
\item there is a bi-Lipschitz bijection 
$$\Psi: (M,g_0) \to (\spt S_\infty,d_\infty)$$ which is $1$-Lipschitz, and  $$\Psi_\sharp(\llbracket 1_M\rrbracket) = S_\infty.$$
\item  $(A_i,g_i\vert_{A_i})$ converges to $(\spt S_\infty,d_\infty)$ in the Gromov-Hausdorff topology. Moreover, for any $\varepsilon>0$, for all $i$ large enough, there is a homotopy equivalence 
$$f_i: M\to \spt S_\infty$$
such that 
the restriction $f_i : (A_i, {g_i}\vert_{A_i}) \to (\spt S_\infty,d_\infty)$ is an $\varepsilon$-isometry. 
\end{enumerate}

\end{theo}

\begin{proof}

\textbf{Step 1: Finding good subsets}

For technical convenience, set
$$g' := \frac{(n-1)^2}{4n}g_0,\quad g'_i:=\frac{(n-1)^2}{4n}g_i.$$
Note that after rescaling, 
$$h(g') = 2\sqrt{n}.$$
By our assumptions, there is a sequence $c'_i>h(g'_i)$ of positive numbers such that
\begin{equation}\label{cv n-1}
\lim_{i\to \infty} c'_i = 2\sqrt{n}.
\end{equation}
By Lemma \ref{psic}, the maps 
$$\mathcal{P}_{c'_i} : \tilde{M} \to S^\infty$$
are $\Gamma$-equivariant.
The quotient maps
$$ \mathcal{P}_{c'_i} : (M,g'_i)\to S^\infty/\lambda_{\Gamma}(\Gamma)$$
can be perturbed to be smooth  immersions. Those new maps now belong to $\mathscr{H}_M$. After a further small perturbation, we obtain homotopic smooth immersions 
$$\phi_i\in \mathscr{H}_M$$ sending $(M,g'_i)$ inside $\mathbb{S}^+/\Gamma$, see \cite[Lemma 1.6]{Antoine23a}. Moreover, by (\ref{bhy}) and (\ref{cv n-1}),  it is not hard to ensure that after those standard smoothings, for all $x\in M$:
\begin{equation}\label{nui}
\sum_{j=1}^n |d_x\phi_i(e'_j)|^2 \leq n+\nu_i
\end{equation}
for some positive $\nu_i \to 0$ (with respect to $g'_i$), where $\{e'_j\}$ is an orthonormal basis for $g'_i$. By (\ref{nui}) and the inequality of arithmetic and geometric means, 
\begin{equation}\label{metlq}
|\Jac \phi_i| \leq (1+\frac{\nu_i}{n})^{n/2}
\end{equation} 
where the Jacobian is computed with respect to $g'_i$.
By Theorem \ref{theorem:bcg}, 
$$\spherevol(M) = \Vol(M,g'), $$ on the other hand we have $\Vol(M,g'_i) = \Vol(M,g')$ by assumption. Hence, by (\ref{metlq}), $|\Jac \phi_{i} |_{g'_i}$ converges to $1$ on an open region $\hat{\Omega}_i\subset M$ with 
$$\lim_{i\to \infty}\Vol(\hat{\Omega}_i,g'_i) =\Vol(M,g'),$$
 which by (\ref{nui}) forces  
\begin{equation}\label{gradient}
\lim_{i\to \infty}  \big{\|}\sum_{u,v=1}^n| \mathbf{g}_{\mathrm{Hil}}(d\phi_i(e'_u),d\phi_i(e'_v)) - \delta_{uv} | \big{\|}_{L^\infty(\hat{\Omega}_i)}=0
\end{equation}
where $\mathbf{g}_{\mathrm{Hil}}$ is the standard Hilbert Riemannian metric on the spherical quotient $S^\infty/\lambda_{\Gamma}(\Gamma)$, and $\{e'_u\}_{u=1}^n$ denotes any choice of orthonormal bases for the tangent spaces of $(M,g'_i)$.


Exactly as in the proof of Theorem \ref{uniqueness1} and using (\ref{gradient}), we first find open subsets $\Omega_i \subset M$ with 
\begin{equation}\label{volomage}
\lim_{i\to \infty}\Vol(\Omega_i,g'_i) =\Vol(M,g'),
\end{equation}
which satisfy
$$\lim_{i\to \infty}  \big{\|}\sum_{u,v=1}^n| g'(d (\mathrm{Bar}\circ \phi_i)(e'_u),d (\mathrm{Bar} \circ \phi_i)(e'_v)) - \delta_{uv} | \big{\|}_{L^\infty(\Omega_i)}=0.$$
Then we  define smoothings of $r^{(i)}$-neighborhoods of $\Omega_i$ in $(M,g'_i)$, called $\Omega_{i,r^{(i)}}$, so that the closure of $\Omega_{i,r^{(i)}}$ is a compact manifold with a smooth boundary whose area $\Area(\partial \Omega_{i,r^{(i)}}, g'_i)$ goes to $0$ as $i\to \infty$, and the restriction $\mathrm{Bar} \circ \phi_i \vert_{\Omega_{i,r^{(i)}}}$ is uniformly Lipschitz.

\textbf{Step 2: Constructing the limit map}

We set
$$(N_i,h_i) := (\Omega_{i,r^{(i)}},g'_i\vert_{\Omega_{i,r^{(i)}}}).$$
In order to apply Wenger's compactness theorem, we need a uniform diameter bound. For that reason, if $\dist_{h_i}$ denotes the intrinsic metric induced by $g'_i$ using paths contained in $N_i$, we set
$$\hat{d}_i :=  \min\{\dist_{h_i}, 6\diam(M,g')\}.$$ This defines a metric on $N_i$ with diameter at most $6\diam(M,g')$, and it is locally isometric to the induced intrinsic metric  $h_i$. 
We then set 
$$D_{i} := \llbracket 1_{N_i}\rrbracket.$$

By Wenger's compactness theorem, the integral current spaces $D_i$  converge to an integral current space $$\hat{C}_\infty=(\hat{X}_\infty,\hat{d}_\infty,\hat{S}_\infty)$$ in the intrinsic flat topology, after picking a subsequence if necessary. In particular, there are a Banach space $\hat{\mathbf{Z}}$, and isometric embeddings
\begin{equation}\label{could have}
(N_i,\hat{d}_i) \hookrightarrow \hat{\mathbf{Z}},\quad \spt \hat{S}_\infty \hookrightarrow \hat{\mathbf{Z}},
\end{equation}
with the usual abuse of notations,
such that $\llbracket 1_{N_i}\rrbracket$ converges to $\hat{S}_\infty$ in the flat topology inside $\hat{\mathbf{Z}}$.

Next, we check that Assumption \ref{assump} is satisfied for 
$$(N,h)=(M,g'),\quad  S_i=D_i,\quad (N_i,h_i) = (\Omega_{i,r^{(i)}},g'_i\vert_{\Omega_{i,r^{(i)}}}),$$
$$  \varphi_i = \mathrm{Bar} \circ \phi_i ,\quad  R_i=\Omega_i...$$ (Note however that the additional condition of Proposition \ref{proposition:limit map} (2) is a priori not satisfied, which accounts for the difference between the statements of Theorem \ref{uniqueness1} and Theorem \ref{technical uniqueness1}.) Thus by Proposition \ref{proposition:limit map} (1), there is a limit map 
 $$\varphi_\infty : (\spt \hat{S}_\infty,\hat{d}_\infty) \to (M,g') $$
 which is Lipschitz,  bijective and whose inverse 
 $$\hat{\Psi} := \varphi_\infty^{-1}$$
  is $1$-Lipschitz with respect to the intrinsic metrics. Hence, $\hat{\Psi} $ is clearly $1$-Lipschitz and bi-Lipschitz.
By Lemma \ref{lemma:limit map} (2), $\hat{\Psi} _\sharp (\llbracket 1_M\rrbracket) =\hat{S}_\infty$.

 \textbf{Step 3: Convergence for the original induced metric}
 
We also need to check that $(N_i, \dist_{h_i},\llbracket 1_{N_i}\rrbracket)$, not just $(N_i,\hat{d}_i,\llbracket 1_{N_i}\rrbracket)$, subsequentially converges to the integral current space $\hat{C}_\infty$. 
Notice that for any $x\in N_i$ and $R\in(0,3\diam(M,g'))$, the metric balls $B_{\dist_{h_i}}(x,R) \subset (N_i,\dist_{h'_i})$ and $B_{\hat{d}_i}(x,R) \subset (N_i,{\hat{d}_i})$ are globally isometric. In particular, since $(\spt \hat{S}_\infty,d_\infty)$ has diameter at most that of $(M,g')$ by $1$-Lipschitzness of $\Psi$, if $O_r$ denotes the $r$-neighborhood of $\spt \hat{S}_\infty$ in $\hat{\mathbf{Z}}$, we have:
whenever $r\in (0,\diam(M,g'))$, for any $i$ and pair of points $x,y\in N_i \cap O_r$,  
 \begin{equation} \label{kpo}
 \dist_{h_i}(x,y) = \hat{d}_i(x,y).
 \end{equation}
By the slicing theorem for metric currents,
we can choose for each $i$,  some radius $r_i\in (0,\diam(M,g'))$ converging to $0$, such that if we set
$$\tilde{O}_i := O_{r_i} \cap N_i\subset \hat{\mathbf{Z}},$$
then $\llbracket 1_{\tilde{O}_i}\rrbracket$ are integral currents in $\hat{\mathbf{Z}}$ converging to $ \hat{S}_\infty$ in the flat topology.  By (\ref{kpo}), this means that $(\tilde{O}_i,  \dist_{h_i}\vert_{\tilde{O}_i},\llbracket 1_{\tilde{O}_i}\rrbracket )$ converges  to $\hat{C}_\infty$ in the intrinsic flat topology. 
We deduce in particular that the push-forward of $\llbracket 1_{\tilde{O}_i}\rrbracket $  by $\mathrm{Bar} \circ \phi_i$  converges to $\llbracket 1_M \rrbracket$ as currents in $(M,g')$. Then the liminf as $i\to \infty$ of the mass of this push-forward is at least  $\Vol(M,g')$ by lower semicontinuity of the mass. 
By the Jacobian bounds (\ref{metlq}), (\ref{jacbar<}), and since by (\ref{volomage}) we have $\lim_{i\to \infty} \Vol(N_i,h_i) = \Vol(M,g'),$ 
\begin{equation} \label{hopee}
\lim_{i\to \infty}\Vol(\tilde{O}_i ,h_i) = \Vol(M,g'),\quad \lim_{i\to \infty}\Vol(N_i\setminus \tilde{O}_i ,h_i) =0.
\end{equation}
We conclude that $(N_i, \dist_{h_i},\llbracket 1_{N_i}\rrbracket)$ converges to the same limit as $(\tilde{O}_i,  \dist_{h_i}\vert_{\tilde{O}_i},\llbracket 1_{\tilde{O}_i}\rrbracket )$ in the intrinsic flat topology, which is $\hat{C}_\infty$, as desired.

\textbf{Step 4: Gromov-Hausdorff convergence and $\varepsilon$-isometries}

In general, $(N_i, \dist_{h_i})$ does not converge in the Gromov-Hausdorff topology to $(\spt \hat{S}_\infty,d_\infty)$. The end of the proof is about fixing this issue. By (\ref{could have}), there are  finite subsets $\Sigma_i\subset N_i$ converging in the Hausdorff topology to $\spt \hat{S}_\infty$ in $\hat{\mathbf{Z}}$. For any $t>0$, let 
$$\Sigma_{i,t} := \text{$t$-neighborhood of $\Sigma_i$ in $(N_i,\dist_{h_i})$}.$$ By lower semicontinuity of the mass and (\ref{could have}),  for any $s_1>2t>0$ and any sequence of points $p_i\in \Sigma_{i,t}$, 
  \begin{equation}\label{khov0}
 \liminf_{i\to \infty} \Vol(B_{\dist_{h_i}\vert_{\Sigma_{i,t}}}(p_i,s_1),h_i) >\kappa_0(s_1)>0
  \end{equation}
  for some $\kappa_0(s_1)$ not depending on $t$. 
We also have the following stronger property:  
 for any $s_1>2t>0$ and any sequence of points $p_i\in \Sigma_{i,t}$, 
 \begin{equation}\label{khov}
 \liminf_{i\to \infty} \Vol(B_{h_i\vert_{\Sigma_{i,t}}}(p_i,s_1),h_i) >\kappa(s_1)>0
 \end{equation}
 for some $\kappa(s_1)$ not depending on $t$. Note that this is indeed a stronger inequality, since ${h_i\vert_{\Sigma_{i,t}}}$ is the intrinsic metric on $\Sigma_{i,t}$ induced by $h_i$ using paths inside $\Sigma_{i,t}$, and 
 $$B_{h_i\vert_{\Sigma_{i,t}}}(p_i,s_1) \subset B_{\dist_{h_i}\vert_{\Sigma_{i,t}}}(p_i,s_1).$$  
To check this stronger property, recall that $\spt \hat{S}_\infty$ has been shown to be bi-Lipschitz to the closed Riemannian manifold $(M,g')$ via a map $\varphi_\infty$.
 For any two points $a,b\in \Sigma_{i,t}$, we can find $a',b'\in\spt \hat{S}_\infty\subset \hat{\mathbf{Z}}$ approximating $a,b$. Then, given a minimizing geodesic segment $\gamma$ in $(M,g')$ between $\varphi_\infty(a'),\varphi_\infty(b')$, we can approximate $(\varphi_\infty)^{-1}(\gamma)$ by a curve in $\Sigma_{i,t}$ between $a,b$ without increasing the length by more than a constant factor. Hence, for $i$ large, 
 $$B_{\dist_{h_i}\vert_{\Sigma_{i,t}}}(p_i,\lambda_0 s_1) \subset B_{h_i\vert_{\Sigma_{i,t}}}(p_i,s_1)$$
 for some $\lambda_0\in (0,1)$ independent of $t$. This and (\ref{khov0}) explain (\ref{khov}).

 By (\ref{hopee}) and by the coarea formula, there is a $t>0$, arbitrarily small, such that
 $$\lim_{i\to \infty} \Vol(\Sigma_{i,t},h_i) = \Vol(M,g'),\quad \lim_{i\to \infty} \Area(\partial \Sigma_{i,t},h_i)=0.$$
This means that after taking a subsequence, we find $t_i>0$ converging to $0$ so that  if we set
$$A_i:=\Sigma_{i,t_i}$$
then  for any $s_1>0$ and any sequence of points $p_i\in A_i$, 
  \begin{equation}\label{hopee1}
  \liminf_{i\to \infty} \Vol(B_{h_i\vert_{A_i}}(p_i,s_1), h_i)  >0,
   \end{equation}
$$  \lim_{i\to \infty} \Vol(A_i,h_i)  = \Vol(M,g'),$$
 $$ \lim_{i\to \infty} \Area(\partial A_i,h_i)=0.$$
 Now we can reapply all the arguments in \textbf{Step 2} and \textbf{Step 3} to a smoothing of $A_i = \Sigma_{i,{t_i}}$ instead of $N_i$.
Let us summarize what we have achieved so far: 
subsequentially,  $(A_i, g'_i\vert_{A_i})$ converges to an integral current space 
$$C_\infty=(X_\infty,d_\infty,S_\infty),$$
 and there are a Banach space $\mathbf{Z}$, and isometric embeddings
\begin{equation}\label{could have bis}
(A_i, g'_i\vert_{A_i}) \hookrightarrow \mathbf{Z},\quad \spt S_\infty \hookrightarrow \mathbf{Z},
\end{equation}
with the usual abuse of notations,
such that $\llbracket 1_{A_i}\rrbracket$ converges to $S_\infty$ in the flat topology inside $\mathbf{Z}$. Moreover, there is a bi-Lipschitz, $1$-Lipschitz map 
$$\Psi : (M,g') \to (\spt S_\infty,d_\infty)$$
which is the inverse of a limit map constructed using Lemma \ref{lemma:limit map} applied to $\mathrm{Bar}\circ \phi_i$. The following analogue of (\ref{hopee}) holds: for any $r>0$, if $O_r$ is the $r$-neighborhood of $\spt S_\infty$ in $\mathbf{Z}$,
\begin{equation}\label{hopee3}
\lim_{i\to \infty} \Vol(A_i \setminus O_r, g'_i\vert_{A_i}) =0.
\end{equation}
The key additional  property we gained is that $(A_i, g'_i\vert_{A_i})$ now converges to $\spt S_\infty$ in the Hausdorff topology inside $\mathbf{Z}$,  by (\ref{hopee1}) and (\ref{hopee3}).  Note that in general,  $\spt S_\infty$ and the previous space $\spt \hat{S}_\infty$ could be very different. 

We can then set
$$f_i:= \Psi\circ \mathrm{Bar} \circ \phi_i: M\to \spt S_\infty,$$
which is a homotopy equivalence. By Lemma \ref{lemma:limit map} (1), we conclude that for any $\varepsilon>0$, 
$$f_i:(A_i, g'_i\vert_{A_i}) \to  (\spt S_\infty,d_\infty)$$ is an $\varepsilon$-isometry if $i$ is large. 
All of these complete the proof, after rescaling all the Riemannian metrics by $\frac{4n}{(n-1)^2}$.
\end{proof}

\subsection{Equidistribution of geodesic spheres in hyperbolic manifolds} \label{appendix c}

Consider $(M,g_0)$ a closed hyperbolic manifold, with universal cover $\tilde{M}$. Fix $\mathbf{x}\in M$ and let $\tilde{\mathbf{x}}$ be a lift of $\mathbf{x}$ by the natural projection $\tilde{M}\to M$. Let $T^1M$ denote the unit tangent bundle of $M$. 
Let $\tilde{S}(\tilde{\mathbf{x}},t)$ be the geodesic sphere of radius $t$ centered at $\tilde{\mathbf{x}}$ in $\tilde{M}$, and let $\tilde{S}_1(\tilde{\mathbf{x}},t)$ be its lift to the unit tangent bundle $T^1\tilde{M}$ by considering the outward unit normal vectors on $\tilde{S}(\tilde{\mathbf{x}},t)$. 
Let $S(\mathbf{x},t)$ denote the projection of $\tilde{S}(\tilde{x},t)$ in $M$, and let $S_1(\mathbf{x},t)$ be the projection of $\tilde{S}_1(\tilde{\mathbf{x}},t)$ to the unit tangent bundle $T^1M$. A measure on $T^1M$ (resp. ${S}_1({x}_0,t)$) is called invariant if it is induced by a measure on $T^1\tilde{M}$ invariant by isometries of $\tilde{M}$ (resp. induced by a measure on $\tilde{S}_1(\tilde{\mathbf{x}},t)$ invariant by rotations of center $\tilde{\mathbf{x}}$ in $\tilde{M}$).

As a corollary of the mixing property for the geodesic flow on closed hyperbolic manifolds, the lift of geodesic spheres equidistribute in the unit tangent bundle. This is for instance explained in \cite[Section 2]{EskinMcMullen93} for surfaces and generalized in \cite[Theorem 1.2]{EskinMcMullen93} \footnote{I thank Ben Lowe for pointing out this reference.}. With the above notations, the statement is the following:

\begin{theo}\label{EM}
For any continuous function $f:T^1M\to\mathbb{R}$,
$$\lim_{t\to \infty} \int_{{S}_1({x}_0,t)} f(y) d\mu_t(y) = \int_{T^1M} f(y) dv_{T^1M}(y),$$
where $d\mu_t$ is the unique invariant probability measure on ${S}_1({x}_0,t)$ and $dv_{T^1M}$ is the unique invariant probability measure on $T^1M$.

\end{theo}

Below, areas (namely $(n-1)$-dimensional Hausdorff measures) and lengths are computed using $g_0$. Given an open subset $U\subset M$, let $\pi_1(M,U)$ denote the relative homotopy group. Consider a (not necessarily length minimizing) geodesic segment $\sigma$ in $(M,g_0)$ with two different endpoints $x,y\in M$ and let $U_x,U_y$ be two disjoint open geodesic balls centered at $x$ and $y$. Fix $\tilde{\mathbf{x}}\in \tilde{M}$ as before.  Let
$$\pi:\tilde{M} \to M$$ 
be the natural projection.
\begin{coro} \label{coro equidistribution}
There is $\theta\in(0,1)$ depending on $M,\sigma, U_x,U_y$ such that for all $t$ large enough, there is an open subset $W_t\subset \tilde{S}(\tilde{\mathbf{x}},t)$ satisfying 
$$\Area(W_t) \geq \theta \Area(\tilde{S}(\tilde{\mathbf{x}},t))$$
and with the following property:
for any $z\in W_t$, if $l : [0,t] \to \tilde{M}$ denotes the length minimizing geodesic from $\tilde{\mathbf{x}}$ to $z$ in $(\tilde{M},g_0)$ parametrized by arclength, there are disjoint intervals
$$[a_1,b_1],...,[a_m,b_m] \subset [0,t]$$
such that 
\begin{enumerate}
\item $ \sum_{j=1}^m |b_j-a_j| \geq \theta t$,
\item for $j\in \{1,...,m\}$, the endpoints satisfy $\pi(l(a_j)) \in U_x$ and $\pi(l(b_j)) \in U_y$,
\item   for  $j\in \{1,...,m\}$, $\pi\circ l : [a_j,b_j]\to M$ is a geodesic segment joining $\pi(l(a_j))$ to $\pi(l(b_j))$, which is in the same class as $\sigma$ in $\pi_1(M,U_x\cup U_y)$.
\end{enumerate} 
\end{coro}

\begin{proof}
Let $t_0$ be the length of $\sigma$. By continuity, there exist an open subset $O$ of the unit tangent bundle $T^1M$ depending only on $M,U_x,U_y$, 
such that for any tangent vector $v$ in $O$, the basepoint $p$ of $v$ lies in $U_x$, and the geodesic $\gamma$ starting at $p$ with direction $v$ and length $t_0$ ends at a point $q\in U_y$, and satisfies the following:
$$\gamma \in [\sigma] \in \pi_1(M,U_x\cup U_y).$$
Informally, geodesics of length $t_0$ starting at a vector in $O$ stay ``close'' to $\sigma$.

Let $\tilde{\mu}_t$ be the invariant probability measure on $\tilde{S}_1(\tilde{\mathbf{x}},t)$ and set 
$$\tilde{O}:=\pi^{-1}(O) \subset T^1\tilde{M},\quad \tilde{O}_t:= \tilde{O}\cap \tilde{S}_1(\tilde{\mathbf{x}},t).$$
Recall that $\tilde{S}_1(\tilde{\mathbf{x}},t)$ is the lift of the sphere $\tilde{S}(\tilde{\mathbf{x}},t)$ by its normal unit vector. Below, by abuse of notations, we will identify  $\tilde{S}_1(\tilde{\mathbf{x}},t)$ and  $\tilde{S}(\tilde{\mathbf{x}},t)$.
By applying Theorem \ref{EM} to the characteristic function of $O$, for all $t$ large enough,  
\begin{equation}\label{c10}
\tilde{\mu}_{t}(\tilde{O}_t)>c_1>0
\end{equation}
for some $c_1$ independent of $t$. If $x\in \tilde{S}_1(\tilde{\mathbf{x}},t)$, let $\tau_x$ be the unique geodesic segment from the basepoint $\tilde{\mathbf{x}}$ to $x$ in $\tilde{M}$.

We claim that for some $c_2,c_3>0$, for any large integer $N$,
\begin{align*}
\tilde{\mu}_{Nt_0}(&\{x\in \tilde{S}_1(\tilde{\mathbf{x}},Nt_0): \text{ for at least $c_2N$ distinct $k\in \{1,...,N\}$,  $\tau_x \cap \tilde{O}_{kt_0}\neq\varnothing$}\}) > c_3.
\end{align*}
Roughly speaking, this inequality means that for a uniformly positive fraction of the sphere $\tilde{S}_1(\tilde{\mathbf{x}},Nt_0)$, geodesics  from $\tilde{\mathbf{x}}$ to that portion of $\tilde{S}_1(\tilde{\mathbf{x}},Nt_0)$ stay close to $\sigma$ on a uniformly positive fraction of their length. Before proving the claim, note that $\tilde{S}_1(\tilde{\mathbf{x}},t)$ is a sphere parametrized by $S^2$ via the exponential map with basepoint $\tilde{x}\in \tilde{M}$, and that the measure on $S^2$ corresponding to $\tilde{\mu}_{t}$ is just the standard uniform probability measure $d\nu_{S^2}$. 
We let $\chi_{\tilde{O}_t}:S^2\to \{0,1\}$ be the characteristic function of the subset corresponding to $\tilde{O}_t$ and we compute for any large $N$:
$$c_1<\frac{1}{N}\sum_{k=1}^N \tilde{\mu}_{kt_0}(\tilde{O}_{kt_0}) = \frac{1}{N}\sum_{k=1}^N \int_{S^2} \chi_{\tilde{O}_{kt_0}} d\nu_{S^2} = \int_{S^2} \frac{1}{N}\sum_{k=1}^N \chi_{\tilde{O}_{kt_0}} d\nu_{S^2}$$
where the first inequality follows from (\ref{c10}).  So there are $c_2,c_3>0$, for any large $N$, on some subset of $S^2$ of $d\nu_{S^2}$-measure at least $c_3$,
$$\frac{1}{N}\sum_{k=1}^N \chi_{\tilde{O}_{kt_0}}> c_2$$
which is exactly the claim.
\end{proof}

\subsection{From equidistribution of geodesic spheres to intrinsic flat stability}
Let $(M,g_0)$ be a closed oriented hyperbolic manifold of dimension $n$.
One of the main technical tools in this section is the following volume entropy comparison, which roughly speaking says that if a sequence of metrics $g_i$ on $M$ approximates a metric space which is metrically dominated by $(M,g_0)$, then the volume entropy of $g_i$ is eventually strictly larger than that of $g_0$. Its proof relies on the equidistribution of geodesic spheres in hyperbolic manifolds, Theorem \ref{EM}.

\begin{theo} \label{cle}
Let $(M,g_0)$ be a closed oriented hyperbolic manifold of dimension $n\geq 2$. Suppose that the following holds:
\begin{enumerate}
\item 
there is a metric $d$ on $M$ such that there is a bi-Lipschitz bijection 
$$\Psi: (M,g_0) \to (M,d)$$ which is $1$-Lipschitz,
\item there are Riemannian metrics $g_i$ ($i\geq 1$) on $M$ so that for any $\varepsilon>0$, for all $i$ large enough, there are an open subset $A_i\subset M$, and a homotopy equivalence 
$$f_i: M\to M$$
such that 
the restriction $f_i : (A_i, g_i\vert_{A_i}) \to (M,d)$ is an $\varepsilon$-isometry. 
\end{enumerate}
Then, if $\Psi$ is not an isometry, we have
$$\liminf_{i\to \infty} h(g_i) >h(g_0).$$

\end{theo}

\begin{remarque}
We emphasize that $(A_i, g_i\vert_{A_i})$  denotes the metric space whose metric is induced by $g_i$ using paths in $A_i$. In particular, it is not in general isometric to $(A_i,\dist_{g_i} \vert_{A_i})$, where $\dist_{g_i} \vert_{A_i}$ is the restriction of $\dist_{g_i}$ to the subset $A_i$.
\end{remarque}

\begin{proof}

Consider small positive numbers $\eta, \varepsilon\in (0,1)$ to be fixed later, and consider $i$ large enough so that there is $A_i\subset M$ and a homotopy equivalence $f_i:M\to M$ whose restriction 
$$f_i : (A_i,g_i\vert_{A_i}) \to (M,d)$$
is an $\varepsilon$-isometry, as in condition (2). 


Let us then define ``$f_i$-lifts''. Given a point $p\in (M,g_0)$, we say that $p_i\in A_i$ is an $f_i$-lift of $p$ if $\Psi^{-1}(f_i(p_i)) $ is $\eta$-close to $p$ with respect to the hyperbolic metric $g_0$. 
Given a $g_0$-geodesic segment $\sigma$ in $(M,g_0)$ with endpoints $s,t$ (which is parametrized by arclength), we say that a curve $\sigma_i$ with endpoints $s_i,t_i$ in $(A_i,g_i\vert_{A_i})$ is a $f_i$-lift of $\sigma$ if 
\begin{itemize}
\item $s_i,t_i$ are $f_i$-lifts of $s,t$, 
\item $\length_{g_i}(\sigma_i)\leq (1+\eta)\length_{g_0}(\sigma),$
\item $\Psi^{-1}(f_i(\sigma_i)) \in [\sigma]\in \pi_1(M,B_{g_0}(s,\eta) \cup B_{g_0}(t,\eta) )$ where $B_{g_0}$ means $g_0$-geodesic ball.
\end{itemize}
By basic properties of $\varepsilon$-isometries \cite[Exercise 7.5.11]{BBI22} and since the bi-Lipschitz bijection $\Psi$ is $1$-Lipschitz, for any $\eta$, whenever $\varepsilon$ is small enough compared to $\eta$ and the injectivity radius of $(M,g_0)$, any $g_0$-geodesic segment $\sigma$ in $(M,g_0)$ admits an $f_i$-lift $\sigma_i$ in $ (A_i,g_i\vert_{A_i})$. 

Suppose now that the $1$-Lipschitz map $\Psi$ is not an isometry, which just means that there are two distinct points $x,y\in M$ so that 
\begin{equation}\label{longueur1}
d(\Psi(x),\Psi(y)) < \dist_{g_0}(x,y).
\end{equation}
Choose $\varepsilon, \eta$ and accordingly $i$, so that
\begin{equation}\label{longuueur2}
0< \varepsilon\ll \eta\ll \frac{\dist_{g_0}(x,y) -d(\Psi(x),\Psi(y))}{100}.
\end{equation}
Let $x_i,y_i$ be $f_i$-lifts of $x,y$. To fix ideas, let us assume without loss of generality that $\Psi^{-1}(f_i(x_i))=x$ and $\Psi^{-1}(f(y_i))=y$.
By the $\varepsilon$-isometry $f_i$ and (\ref{longuueur2}), 
\begin{equation}\label{longueur2}
\dist_{g_i}(x_i,y_i) \leq d(\Psi(x),\Psi(y)) +\varepsilon < \dist_{g_0}(x,y).
\end{equation}

Let $\sigma_i \subset (A_i,g_i\vert_{A_i})$ be a $g_i$-length minimizing segment which realizes the $g_i\vert_{A_i}$-distance between $x_i$ and $y_i$.
Consider the compact curve $\Psi^{-1}(f_i(\sigma_i))$ with endpoints $x,y$, and let us minimize its length among all  curves homotopic  to $\Psi^{-1}(f_i(\sigma_i))$ with same endpoints. This yields a $g_0$-geodesic segment 
$$\sigma : [0,\length_{g_0}(\sigma)]\to (M,g_0)$$ parametrized by arclength, with endpoints $x,y$. 
Note that, since $f_i$ is a homotopy equivalence, any $f_i$-lift of $\sigma$ with endpoints $x_i,y_i$ (that can always be ensured) is in fact homotopic (with fixed endpoints) to $\sigma_i$ inside $(M,g_i)$. 

By (\ref{longueur2}), by continuity and uniqueness properties for geodesic loops in hyperbolic manifolds, there are small disjoint open $g_0$-geodesic balls
$U_x,U_y$ containing respectively $x,y$ and some $\theta_0\in (0,1)$ with the following property: 
for any geodesic segment $\omega:[0,L]\to (M,g_0)$  such that $\omega(0) \in U_x$, $\omega(L) \in U_y$, and 
$$\omega \in [\sigma]\in \pi_1(M,U_x\cup U_y),$$
we can find a corresponding $f_i$-lift $\omega_i$ in $(M,g_i)$ and a curve $\hat{\omega}_i$ homotopic (with fixed endpoints) to $\omega_i$  such that 
\begin{equation}\label{rty2}
\length_{g_i}(\hat{\omega}_i) \leq \theta_0 \length_{g_0}(\omega).
\end{equation}
An important remark is that, since $\Psi$ is bi-Lipschitz, and since $f_i : (A_i,g_i\vert_{A_i}) \to (M,d)$ is an $\varepsilon$-isometry, the $g_0$-length of $\sigma$ is uniformly bounded independently of $i$. By compactness, we can assume without loss of generality that $\sigma$ is fixed and does not depend on $i$. For that reason, we will assume that $U_x,U_y,\theta_0$ only depend on $M,x,y,\sigma$ but not on $i$.
The notion of $f_i$-lifts of curves and their properties extend naturally to curves in the universal covers $(\tilde{M},g_0)$ and $(\tilde{M},g_i)$.

\vspace{1em}

Given a Riemannian metric $g$ on $M$ and a point $\mathbf{x}\in M$, let $\mathcal{L}_{\leq L}(g,\mathbf{x})$ be the collection of homotopy classes of loops with fixed basepoint $\mathbf{x}$ which contain at least one loop based at $\mathbf{x}\in M$ of $g$-length at most $L$. It is well-known that the volume entropy of $g$ is:
$$h(g) = \lim_{L\to \infty} \frac{\log(\mathbf{card} \mathcal{L}_{\leq L}(g, \mathbf{x}))}{L}$$
where $\mathbf{card}$ denotes the cardinality of a set. In particular, it does not depend on the choice of base point $\mathbf{x}$.

Fix a base point $\mathbf{x}\in (M,g_0)$ and a lift $\tilde{\mathbf{x}}\in \tilde{M}$ (here the ``lift''  belongs to the universal cover, it is not to be confused with the notion of $f_i$-lift).
By uniqueness of geodesic loops in homotopy classes of loops inside hyperbolic manifolds, we identify $\mathcal{L}_{\leq L}(g_0,\mathbf{x})$ with the set of geodesic loops based at $\mathbf{x}$ with length at most $L$. Classically, the volume entropy of the hyperbolic $n$-plane $(\tilde{M},g_0)$ is $n-1$, meaning that 
\begin{equation} \label{nmb}
\lim_{L\to \infty}\frac{\log(\mathbf{card} \mathcal{L}_{\leq L}(g_0,\mathbf{x}))}{L}=n-1. 
\end{equation}

The crux of the proof is that the equidistribution of lifts of geodesic spheres to the unit tangent bundle plus the distance comparison inequality (\ref{rty2}) force the volume entropy of $(M,g_i)$ to be strictly larger than $n-1$.

For all $i$ large, fix an $f_i$-lift $\mathbf{x}_i$ of the basepoint $\mathbf{x}$ inside $(M,g_i)$, and a lift $\tilde{\mathbf{x}}_i \in (\tilde{M},g_i)$ of $\mathbf{x}_i$ in the universal cover. 
As we saw earlier, we assume without loss of generality that $\sigma$ does not depend on $i$.
By Corollary \ref{coro equidistribution}, inequality (\ref{rty2}) and the properties of  $f_i$-lifts, we deduce that there are  some small $\theta_1\in (0,1)$ and $\varepsilon,\eta$ (this is where the latter are fixed) depending on $M,\sigma, U_x,U_y$ but independent of $i$, such that the following holds for all $i$ large. In the geodesic spheres $\tilde{S}(\tilde{\mathbf{x}},L)$ of universal cover $(\tilde{M},g_0)$, for any  $L$ large enough, there is an open subset $W_L \subset \tilde{S}(\tilde{\mathbf{x}},L)$ such that
$$\Area(W_L,g_0) \geq \theta_1 \Area(\tilde{S}(\tilde{\mathbf{x}},L),g_0),$$
and for any $q\in W_L$ and any $i$ large enough, the minimizing geodesic $l$ from $\tilde{\mathbf{x}}$ to $q$ admits an $f_i$-lift joining $\tilde{\mathbf{x}}_i$ to an $f_i$-lift of $q$ 
  in $(\tilde{M},g_i)$ which in turn is homotopic (with fixed endpoints) to a curve
 of $g_i$-length at most $(1-\theta_1) L$. In colloquial terms, a uniform fraction of points in $(\tilde{M},g_0)$ at $g_0$-distance $L$ from $\tilde{\mathbf{x}}$ admit $f_i$-lifts in $(\tilde{M},g_i)$ which are at $g_i$-distance significantly less than $L$ from $\tilde{\mathbf{x}}_i$.
 
 By basic hyperbolic geometry (volume of geodesic spheres and balls, etc.) and properties of $f_i$-lifts, the previous paragraph implies that for $i$ large enough, for all $L$ large enough there is a small $\theta_2\in (0,1)$ depending only on $M,\sigma, U_x,U_y$ such that for all $i,L$ large enough,
 \begin{itemize}
 \item
 there are distinct  points $p_1,...,p_K \in (\tilde{M},g_0)$ which are lifts  of $\mathbf{x}$ to $\tilde{M}$,
and their number satisfies 
 \begin{equation}\label{countt}
 K\geq \theta_2 \exp((1+\theta_2)(n-1)L),
 \end{equation}
 \item there are curves $ c_1,...,c_K\subset (\tilde{M},g_0)$ joining $\tilde{\mathbf{x}}$ to $p_1,...,p_K$  respectively,
and they admit $f_i$-lifts in $(\tilde{M},g_i)$, which are respectively homotopic (with fixed endpoints) to curves $c_{i,1},...,c_{i,K}\subset (\tilde{M},g_i)$ of $g_i$-lengths at most $L$,
\item each of the curves $c_{i,1},...,c_{i,K}$ joins $\tilde{\mathbf{x}}_i$ to some other lift of $\mathbf{x}_i$ inside the universal cover $(\tilde{M},g_i)$.
  \end{itemize}

We conclude from (\ref{nmb}) and (\ref{countt}) that for any $i$ large enough, for all $L$ large:
$$\log(\mathbf{card} \mathcal{L}_{\leq L}(g_i,{\mathbf{x}}_i)) \geq \log(\mathbf{card} \mathcal{L}_{\leq (1+\frac{\theta_2}{2})L}(g_0,\mathbf{x})).$$
In particular,
\begin{align*}
h(g_i) &\geq\liminf_{L\to \infty}  \frac{\log(\mathbf{card} \mathcal{L}_{\leq (1+\frac{\theta_2}{2})L}(g_0,\mathbf{x}))}{L}  = (1+\frac{\theta_2}{2} )h(g_0).
\end{align*}
Since $\theta_2 >0$ does not depend on $i$, the proof is complete.

\end{proof}

We are now ready to finish the proof of the intrinsic flat stability theorem.

\begin{theo}\label{intrinsic flat stability}
Let $(M,g_0)$ be a closed oriented hyperbolic manifold of dimension at least $3$. Let $\{g_i\}_{i\geq 1}$ be a sequence of Riemannian metrics on $M$ with $\Vol(M,g_i) = \Vol(M,g_0)$. If 
$$\lim_{i\to  \infty} h(g_i) = h(g_0) =n-1,$$
then there is a sequence of smooth subsets $Z_i \subset M$ such that 
$$\lim_{i\to \infty}\Vol(Z_i,g_i) =\lim_{i\to \infty}\Area(\partial Z_i,g_i) =0$$ and 
$(M\setminus Z_i,g_i\vert_{M\setminus Z_i})$ converges to $(M,g_0)$ in the intrinsic flat topology and Gromov-Hausdorff topology.
\end{theo}

\begin{proof}
Under the assumptions of the theorem, by combining Theorem \ref{technical uniqueness1} and Theorem \ref{cle}, we deduce that subsequentially, there are open subsets $A_i\subset M$ such that if
$$Z_i:=M\setminus A_i$$
then after renumbering,
\begin{itemize}
\item 
$\lim_{i\to \infty}\Vol(Z_i,g_i) =\lim_{i\to \infty}\Area(\partial Z_i,g_i) =0,$
\item 
$(M\setminus Z_i,g_i \vert_{M\setminus Z_i})$ converges in the intrinsic flat topology to an integral current space $C_\infty=(X_\infty,d_\infty,S_\infty),$
\item 
  $(M\setminus Z_i,g_i \vert_{M\setminus Z_i})$ converges to $(\spt S_\infty,d_\infty)$ in the Gromov-Hausdorff topology,
  \item
and  there is an isometric bijection
 $$\Psi : (M,g_0) \to (\spt S_\infty,d_\infty)$$
such that
 $$\Psi_\sharp(\llbracket 1_M\rrbracket) = S_\infty.$$
 \end{itemize}
In particular, $C_\infty$ is isomorphic as an integral current space to the hyperbolic manifold $(M,g_0)$. Since this integral current space is the only possible subsequential limit, there are $Z_i\subset M_i$ with  $\Vol(Z_i,g_i)$ and $\Area(\partial Z_i,g_i) $ converging to $0$, and $(M\setminus Z_i,g_i \vert_{M\setminus Z_i})$ converges to $(M,g_0)$ in the intrinsic flat and Gromov-Hausdorff topologies (without the need to take subsequences).
\end{proof}

Recall that intrinsic flat convergence implies weak convergence (see Section \ref{appendix a}). Given a Riemannian metric $g$ on $M$, the mass measure of the integral current space $(M,\dist_{g},\llbracket 1_M\rrbracket)$ is equal to the usual volume measure $\dvol_g$ on $M$. 
The proof of Theorem \ref{theorem:stable} is then completed by combining Theorem   \ref{intrinsic flat stability} and the following general lemma proved by Portegies, which yields that weak convergence plus volume convergence implies Gromov-Prokhorov convergence for Riemanian manifolds:
\begin{lemme} \cite[Lemma 2.1]{Portegies15}
Suppose $Z$ is a complete metric space, and $\{T_i\}$ is a sequence of integral currents in $Z$ converging weakly to an integral current $T$. 
Moreover, assume that $\mathbf{M}(T_i)$ converges to $\mathbf{M}(T)$. 
Then the mass measure $\|T_i\|$ converges weakly to $\|T\|$ as measures on $Z$.
\end{lemme}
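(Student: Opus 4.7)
The plan is to verify weak convergence directly by showing that $\int \varphi \, d\|T_i\| \to \int \varphi \, d\|T\|$ for every bounded continuous $\varphi : Z \to \mathbb{R}$. The key idea is that weak convergence of integral currents gives only a one-sided inequality (lower semicontinuity of mass), but the hypothesis $\mathbf{M}(T_i) \to \mathbf{M}(T)$ lets one apply this one-sided estimate both to $\varphi$ and to a constant minus $\varphi$, yielding the matching upper bound.

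First I would record the following refinement of the lower semicontinuity of mass: if $T_i \to T$ weakly as integral currents and $f : Z \to [0,\infty)$ is bounded and lower semicontinuous, then
\[
\int f \, d\|T\| \;\leq\; \liminf_{i \to \infty} \int f \, d\|T_i\|.
\]
This is a standard strengthening of Ambrosio--Kirchheim's lsc of total mass \cite{AK00}: it can be deduced by approximating $f$ from below by bounded Lipschitz functions $h_k \nearrow f$ and applying the defining property of weak convergence of metric currents to test forms of the type $h_k\, d\pi_1 \wedge \cdots \wedge d\pi_n$; equivalently, it is the Portmanteau inequality for the open sets $\{f > t\}$.

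Second, for $\varphi \in C_b(Z)$ I would reduce to the case $0 \leq \varphi \leq C$ by splitting into positive and negative parts. Applying the displayed inequality to $\varphi$ gives
\[
\int \varphi \, d\|T\| \;\leq\; \liminf_i \int \varphi \, d\|T_i\|,
\]
and applying it to $C - \varphi$ (still non-negative and lsc, in fact continuous) gives
\[
C\,\|T\|(Z) - \int \varphi \, d\|T\| \;\leq\; \liminf_i\Big(C\,\|T_i\|(Z) - \int \varphi \, d\|T_i\|\Big).
\]
Since $\|T_i\|(Z) = \mathbf{M}(T_i) \to \mathbf{M}(T) = \|T\|(Z)$ by hypothesis, the right-hand side equals $C\,\|T\|(Z) - \limsup_i \int \varphi \, d\|T_i\|$, so rearranging yields $\limsup_i \int \varphi \, d\|T_i\| \leq \int \varphi \, d\|T\|$. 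Combined with the first inequality, this forces $\lim_i \int \varphi \, d\|T_i\| = \int \varphi \, d\|T\|$, which is exactly weak convergence of measures.

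The argument is not really deep: it is a Brezis--Lieb-type trick exploiting equality of total masses to upgrade a one-sided lower semicontinuity to two-sided convergence. The only step requiring genuine care is the first one, namely the promotion of the lsc of $T \mapsto \mathbf{M}(T)$ to the weighted functionals $T \mapsto \int f \, d\|T\|$ for bounded lsc $f \geq 0$; this is the technical input from the theory of metric currents, and everything else is formal.
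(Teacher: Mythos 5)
The paper does not give a proof of this lemma; it simply cites Portegies \cite[Lemma 2.1]{Portegies15}, so there is no in-paper argument to compare yours against. That said, your proof is correct and is essentially the argument Portegies gives.

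Your only non-formal step is the promotion of lower semicontinuity of mass to the weighted functionals $T\mapsto\int f\,d\|T\|$ for bounded, nonnegative, lower semicontinuous $f$. Both routes you sketch are valid, but it is worth being precise since this is the heart of the matter. The cleanest justification is via Ambrosio--Kirchheim's local lower semicontinuity: if $T_i\to T$ weakly then $\|T\|(A)\le\liminf_i\|T_i\|(A)$ for every \emph{open} $A$ (this is \cite[Prop.~2.7]{AK00}, not just the global statement $\mathbf{M}(T)\le\liminf\mathbf{M}(T_i)$). Since $f$ is lsc and bounded, the layer-cake formula $\int f\,d\mu=\int_0^{\sup f}\mu(\{f>t\})\,dt$ together with Fatou's lemma (legitimate because the integrand is nonnegative and the $t$-range is bounded) yields $\int f\,d\|T\|\le\liminf_i\int f\,d\|T_i\|$. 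Your alternative route via Lipschitz approximation $h_k\nearrow f$ (e.g.\ inf-convolutions) also works, but you should say that the lsc of $\int h_k\,d\|\cdot\|$ comes from applying the global mass lsc to the \emph{restricted} currents $T_i\llcorner h_k\to T\llcorner h_k$, rather than directly "testing against forms''; the mass of $T$ is a supremum over all admissible forms, so a single test form does not by itself give the bound. Once the first step is in place, the $C-\varphi$ trick together with $\mathbf{M}(T_i)\to\mathbf{M}(T)$ is the standard Portmanteau bootstrap and is exactly as you describe.
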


\begin{remarque} \label{optimal}

Sometimes, as in Theorem \ref{theorem:stable},
a sequence of $n$-manifolds $(M_i,g_i)$ converges  to a nice limit space $X$  in a given canonical topology $\mathscr{T}$ only after removing negligible subsets  $Z_i$ from $M_i$. For an example different from Theorem \ref{theorem:stable} and related to scalar curvature, see \cite{DS23}. 
To quantify that phenomenon, we can look at the coarse dimension of $\partial Z_i$.
To measure the coarse dimension of a manifold $(N,h)$, we propose the following notion of ``Euclidean $q$-area'' $\mathcal{A}_{q}(N,h)$:
 $$\mathcal{A}_{q}(N,h):= \sup \{\mathcal{H}^q(\pi(N)); \quad \pi:(N,h) \to \mathbb{R}^q \text{ is a $1$-Lipschitz map}\}$$
where $\mathcal{H}^q$ denotes the standard $q$-dimensional Hausdorff measure. 
Let us declare that $(\partial Z_i,g_i)$  has  coarse dimension $q-1$ if  $\lim_{i\to \infty} \mathcal{A}_{q}(\partial Z_i,g_i)=0$.
\footnote{A reason why we do not use the notion of Uryson width instead of Euclidean $q$-area is that $Z_i$ can usually be chosen to have small $0$-dimensional Uryson width.}
As a corollary of Theorem \ref{theorem:stable}, for the volume entropy inequality, hyperbolic manifolds of dimension $n\geq 3$ are ``codimension 2 stable'' in the measured Gromov-Hausdorff topology. This is in general optimal. What about other stability and convergence problems?

\end{remarque}

\bibliographystyle{alpha}
\bibliography{biblio_22_10_11}

\end{document}